\newtheorem{definition}{Definition}[section]
\newtheorem{theorem}{Theorem}[section]
\newtheorem{lemma}{Lemma}[section]
\newtheorem*{maintheorem*}{Main Theorem}
\numberwithin{equation}{section}
\newcommand{\norm}[1]{\left\| #1 \right\|}
\newcommand{\eps}{\varepsilon}
\newcommand{\eb}{{\eps,\beta}}
\newcommand{\ueb}{u_\eb}
\newcommand{\pt}{\partial_t}
\newcommand{\px}{\partial_x }
\newcommand{\pxx}{\partial_{xx}^2}
\newcommand{\pxxx}{\partial_{xxx}^3}
\newcommand{\ptxxx}{\partial_{txxx}^4}
\newcommand{\pxxxx}{\partial_{xxxx}^4}
\newcommand{\ptxxxx}{\partial_{txxxx}^5}
\newcommand{\ptx}{\partial_{tx}^2}
\newcommand{\ptxx}{\partial_{txx}^3}
\renewcommand{\i}{\ifmmode\mathit{\mathchar"7010 }\else\char"10 \fi}
\renewcommand{\j}{\ifmmode\mathit{\mathchar"7011 }\else\char"11 \fi}
\newcommand{\R}{\mathbb{R}}
\newcommand{\N}{\mathbb{N}}
\newcommand{\supp}{\mathrm{supp}\,}
\begin{document}\large

\title[A Singular limit problem of Rosenau-KdV-RLW  type  ]{A singular limit problem\\ for the Rosenau-Korteweg-de Vries\\-regularized long wave\\ and Rosenau-Korteweg-de Vries  equation.}

\author[G. M. Coclite and L. di Ruvo]{Giuseppe Maria Coclite and Lorenzo di Ruvo}
\address[Giuseppe Maria Coclite and Lorenzo di Ruvo]
{\newline Department of Mathematics,   University of Bari, via E. Orabona 4, 70125 Bari,   Italy}
\email[]{giuseppemaria.coclite@uniba.it, lorenzo.diruvo@uniba.it}
\urladdr{http://www.dm.uniba.it/Members/coclitegm/}

\date{\today}

\keywords{Singular limit, compensated compactness, Rosenau-KdV-RLW, equation, entropy condition.}

\subjclass[2000]{35G25, 35L65, 35L05}

%35G25 Initial value problems for nonlinear higher-order PDE, nonlinear evolution equations
%35L05 Wave equation
%74S20 Finite difference methods
%35L65 Conservation laws
%65M12 Stability and convergence of numerical methods

\thanks{The authors are members of the Gruppo Nazionale per l'Analisi Matematica, la Probabilit\`a e le loro Applicazioni (GNAMPA) of the Istituto Nazionale di Alta Matematica (INdAM)}

\begin{abstract}
We consider the Rosenau-Korteweg-de Vries-regularized long wave and Rosenau-Korteweg-de Vries  equations, which contain nonlinear dispersive effects. We prove
that, as the diffusion parameter tends to zero, the solutions of the dispersive equations converge to the  unique entropy solution of a scalar conservation law.
The proof relies on deriving suitable a priori estimates together with an application of the compensated compactness method in the $L^p$ setting.
\end{abstract}

\maketitle

%\tableofcontents

\section{Introduction}\label{sec:intro}
The dynamics of shallow water waves that is observed along lake shores and beaches has been a research area for the past few decades in the area of
oceanography (see \cite{AB,ZZZC}). There are several models proposed in this context: Korteweg-de Vries (KdV) equation, Boussinesq equation, Peregrine equation, regularized long wave (RLW) equation, Kawahara equation, Benjamin-Bona-Mahoney equation, Bona-Chen equation and several others. These models are
derived from first principles under various different hypothesis and approximations. They are all well studied and very well understood.

The dynamics of dispersive shallow water waves, on the other hand, is captured with slightly different models like  Rosenau-
Kawahara, Rosenau-KdV, and Rosenau-KdV-RLW equations \cite{BTL,EMTYB,HXH,LB,RAB}.

In particular, the Rosenau-KdV-RLW equation is
\begin{equation}
\label{eq:RKV-1}
\pt u +a\px u +k\px u^{n}+b_1\pxxx u +b_2\ptxx u + c\ptxxxx u=0,\quad a,\,k,\,b_1,\,b_2,\,c\in\R.
\end{equation}
Here $u(t,x)$ is the nonlinear wave profile. The first term is the linear evolution one, while $a$ is the advection (or drifting) coefficient. The two dispersion coefficients are  $b_1$ and $b_2$ . The higher order dispersion coefficient is  $c$, while the coefficient of nonlinearity is $k$ where $n$ is the nonlinearity parameter. These are all known and given parameters.

In \cite{RAB}, the authors analyzed \eqref{eq:RKV-1}. They got solitary waves, shock waves and singular solitons along with conservation laws.

In the case $n=2,\, a=0,\, k=1,\, b_1=1,\, b_2=-1,\, c=1$, we have
\begin{equation}
\label{eq:RKV-23}
\pt u +\px u^2 +\pxxx u -\ptxx u +\ptxxxx u=0.
\end{equation}
Choosing $n=2,\, a=0,\, k=1,\, b_2=b_1=0,\, c=1$, \eqref{eq:RKV-1} reads
\begin{equation}
\label{eq:RKV-2}
\pt u + \px u^{2} + \ptxxxx u=0,
\end{equation}
which is known as Rosenau equation (see \cite{Ro1,Ro2}). Existence and uniqueness of solutions for \eqref{eq:RKV-2} has been proved in \cite{P}.

Finally, if $n=2,\, a=0,\, k=1,\, b_1=1,\, b_2=0,\, c=1$, \eqref{eq:RKV-1} reads
\begin{equation}
\label{eq:RKV-3}
\pt u  +\px u^{2}+\pxxx u + \ptxxxx u=0,
\end{equation}
which is known as Rosenau-KdV equation.

In \cite{Z}, the author discussed the solitary wave solutions and \eqref{eq:RKV-3}. In \cite{HXH}, a conservative linear finite difference scheme for the numerical solution for an initial-boundary value problem of Rosenau-KdV equation was considered.
In \cite{E,RTB}, the authors discussed the solitary solutions for \eqref{eq:RKV-3} with  solitary ansatz method. The authors also gave  two invariants for \eqref{eq:RKV-3}. In particular, in \cite{RTB}, the authors  studied the two types of soliton solutions, one is a solitary wave  and the other is a singular soliton. In \cite{ZZ}, the authors proposed an average linear finite difference scheme for the numerical solution of the initial-boundary value problem for  \eqref{eq:RKV-3}.

If $n=2, \, a=0,\, k=1,\, b_1=0,\, b_2=-1,\, c=1$, \eqref{eq:RKV-1} reads
\begin{equation}
\label{eq:RKV-30}
\pt u +\px u^2 -\ptxx u +\ptxxxx u=0,
\end{equation}
which is the Rosenau-RLW equation.

In this paper, we analyze \eqref{eq:RKV-23} and \eqref{eq:RKV-30}. Arguing as \cite{CdREM}, we re-scale the equations as follows
\begin{align}
\label{eq:RKV}
\pt u+ \px u^2+\beta\pxxx u - \beta\ptxx u +\beta^2\ptxxxx u=&0, \\
\label{eq:RKV34}
\pt u+ \px u^2 -\beta\ptxx u +\beta^2\ptxxxx u=&0,
\end{align}
where $\beta$ is the diffusion parameter.

We are interested in the no high frequency limit,  we send $\beta\to 0$ in \eqref{eq:RKV} and \eqref{eq:RKV34}. In this way we pass from \eqref{eq:RKV} and \eqref{eq:RKV34} to the equation
\begin{equation}
\label{eq:BU}
\pt u+\px u^2=0
\end{equation}
which is a scalar conservation law.
We prove that, when $\beta\to0$, the solutions of \eqref{eq:RKV} and \eqref{eq:RKV34} converge  to the unique entropy solution \eqref{eq:BU}.

The paper is organized in three sections. In Section \ref{sec:vv}, we prove the convergence of \eqref{eq:RKV} to \eqref{eq:BU}, while in Section \ref{sec:Ro1}, we show how to modify
the argument of Section \ref{sec:vv} and prove the convergence of \eqref{eq:RKV34} to \eqref{eq:BU}.

\section{The Rosenau-KdV-RLW equation.}\label{sec:vv}

In this section, we consider \eqref{eq:RKV} and augment it with the initial condition
\begin{equation}
u(0,x)=u_{0}(x),
\end{equation}
on which we assume that
\begin{equation}
\label{eq:assinit}
u_0\in L^2(\R)\cap L^4(\R).
\end{equation}
We study the dispersion-diffusion limit for \eqref{eq:RKV}. Therefore, we consider the following fifth order approximation
\begin{equation}
\label{eq:RKV-eps-beta}
\begin{cases}
\pt\ueb+ \px \ueb^2 + \beta\pxxx \ueb - \beta\ptxx \ueb\\
\qquad\quad\qquad\quad +\beta^2\ptxxxx\ueb=\eps\pxx\ueb, &\qquad t>0, \ x\in\R ,\\
\ueb(0,x)=u_{\eps,\beta,0}(x), &\qquad x\in\R,
\end{cases}
\end{equation}
where $u_{\eps,\beta,0}$ is a $C^\infty$ approximation of $u_{0}$ such that
\begin{equation}
\begin{split}
\label{eq:u0eps}
&u_{\eps,\,\beta,\,0} \to u_{0} \quad  \textrm{in $L^{p}_{loc}(\R)$, $1\le p < 4$, as $\eps,\,\beta \to 0$,}\\
&\norm{u_{\eps,\beta, 0}}^2_{L^2(\R)}+\norm{u_{\eps,\beta, 0}}^4_{L^4(\R)}+(\beta+ \eps^2) \norm{\px u_{\eps,\beta,0}}^2_{L^2(\R)}\le C_0,\quad \eps,\beta >0,  \\
&(\beta\eps + \beta\eps^2 +\beta^2)\norm{\pxx u_{\eps,\beta,0}}^2_{L^2(\R)}+(\beta^2\eps^2+\beta^3)\norm{\pxxx u_{\eps,\beta,0}}^2_{L^2(\R)} \le C_{0}, \quad \eps,\beta >0,\\
&\beta^4\norm{\pxxxx u_{\eps,\beta,0}}^2_{L^2(\R)}\le C_{0}, \quad \eps,\beta >0,
\end{split}
\end{equation}
and $C_0$ is a constant independent on $\eps$ and $\beta$.

The main result of this section is the following theorem.
\begin{theorem}
\label{th:main}
Assume that \eqref{eq:assinit} and  \eqref{eq:u0eps} hold.
If
\begin{equation}
\label{eq:beta-eps}
\beta=\mathbf{\mathcal{O}}(\eps^{4}),
\end{equation}
then, there exist two sequences $\{\eps_{n}\}_{n\in\N}$, $\{\beta_{n}\}_{n\in\N}$, with $\eps_n, \beta_n \to 0$, and a limit function
\begin{equation*}
u\in L^{\infty}(\R^{+}; L^2(\R)\cap L^{4}(\R)),
\end{equation*}
such that
\begin{align}
\label{eq:con-u}
&u_{\eps_n, \beta_n}\to u \quad \text{strongly in $L^{p}_{loc}(\R^{+}\times\R)$, for each $1\le p <4$};\\
\label{eq:entropy1}
&u \quad\text{is the unique entropy solution of \eqref{eq:BU}}.
\end{align}
\end{theorem}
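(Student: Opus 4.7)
The plan is to follow the compensated compactness scheme in the $L^p$ setting along the lines of \cite{CdREM}: establish uniform a priori estimates on the smooth approximants $\ueb$, then use a div-curl type argument to extract a subsequence converging almost everywhere, and finally identify the limit as the unique entropy solution of \eqref{eq:BU}.

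First I would derive the basic energy estimate by multiplying \eqref{eq:RKV-eps-beta} by $\ueb$ and integrating in $x$. The convective and pure third-order dispersive contributions vanish after integration by parts, while the mixed terms $-\beta\ptxx\ueb$ and $\beta^2\ptxxxx\ueb$ produce time derivatives of $\tfrac{\beta}{2}\norm{\px\ueb}_{L^2(\R)}^2$ and $\tfrac{\beta^2}{2}\norm{\pxx\ueb}_{L^2(\R)}^2$, yielding
\begin{equation*}
\norm{\ueb(t,\cdot)}_{L^2(\R)}^2+\beta\norm{\px\ueb(t,\cdot)}_{L^2(\R)}^2+\beta^2\norm{\pxx\ueb(t,\cdot)}_{L^2(\R)}^2+2\eps\int_0^t\!\norm{\px\ueb}_{L^2(\R)}^2\ds\le C_0.
\end{equation*}
Multiplying next by $\ueb^3$ produces, modulo cubic remainders absorbed by Young's inequality and by the scaling \eqref{eq:beta-eps}, an $L^\infty_t L^4_x$ bound, and additional weighted estimates on $\px\ueb,\pxx\ueb,\pxxx\ueb,\pxxxx\ueb$ matching \eqref{eq:u0eps} are obtained by further testing against $\pxx\ueb$ and $\pxxxx\ueb$, keeping careful track of how each factor of $\beta$ is absorbed by the viscous dissipation $\eps\norm{\px\ueb}_{L^2(\R)}^2$.

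Given a smooth convex entropy $\eta$ with flux $q$ satisfying $q'(u)=2u\eta'(u)$, multiplying \eqref{eq:RKV-eps-beta} by $\eta'(\ueb)$ gives
\begin{equation*}
\pt\eta(\ueb)+\px q(\ueb)=\eps\pxx\eta(\ueb)-\eps\eta''(\ueb)(\px\ueb)^2+\CR_{\eps,\beta},
\end{equation*}
where $\CR_{\eps,\beta}$ collects the three dispersive contributions. The right-hand side is then split as a term bounded in $\CMloc$ plus a term compact in $\Hneg$: the nonnegative dissipation $\eps\eta''(\ueb)(\px\ueb)^2$ is uniformly $L^1_{loc}$-bounded, $\eps\pxx\eta(\ueb)$ is a spatial derivative of an $L^2_{loc}$-small quantity, and each piece of $\CR_{\eps,\beta}$ is rewritten by repeated integration by parts in $x$ as a spatial derivative whose argument is $o(1)$ in $L^2_{loc}$ thanks to the weighted estimates from the previous step. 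Murat's lemma then delivers $\Hneg$-compactness of $\pt\eta(\ueb)+\px q(\ueb)$.

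The hard part will be the highest-order term $\beta^2\ptxxxx\ueb$: paired with $\eta'(\ueb)$ it cannot be absorbed as a pure time derivative, so one has to pull out a spatial derivative and redistribute the remaining three $x$-derivatives onto $\eta'(\ueb)$, producing nonlinear polynomial combinations of $\px\ueb,\pxx\ueb,\pxxx\ueb$ that must be controlled in $L^2_{loc}$ via the weighted bounds; this is exactly where the sharp scaling $\beta=\mathcal{O}(\eps^4)$ is required, since it is what balances $\beta^2(\pxxx\ueb)^2$ against quantities already controlled by the energy identities. Once $\Hneg$-compactness of the entropy-flux divergence is established, the Tartar-Murat compensated compactness framework in $L^p$, applicable thanks to the uniform $L^4$ bound on $\ueb$, yields a subsequence $u_{\eps_n,\beta_n}\to u$ strongly in $L^p_{loc}(\R^+\times\R)$ for every $1\le p<4$. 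Passing to the limit with $\eta(u)=|u-c|$ suitably regularized shows that $u$ satisfies the \Kruzkov entropy inequality for \eqref{eq:BU}, and \Kruzkovs uniqueness theorem identifies $u$ as the unique entropy solution, completing \eqref{eq:con-u} and \eqref{eq:entropy1}.
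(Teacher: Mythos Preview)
Your overall architecture (energy estimates, then $H^{-1}_{loc}$-compactness of the entropy production via Murat's lemma, then Schonbek's $L^p$ compensated compactness) matches the paper. But there is a genuine gap in the a priori estimates, and it propagates to the entropy analysis.

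The multipliers you list --- $\ueb$, $\ueb^3$, $\pxx\ueb$, $\pxxxx\ueb$ --- are all purely spatial. Testing with them only produces time derivatives of spatial norms and spatial dissipation from the viscosity; none of them generates $L^2_{t,x}$ control of the \emph{mixed} derivatives $\ptx\ueb$, $\ptxx\ueb$, $\ptxxx\ueb$. These mixed bounds are exactly what is needed for the dispersive residuals in the entropy identity, because the terms $-\beta\eta'(\ueb)\ptxx\ueb$ and $\beta^2\eta'(\ueb)\ptxxxx\ueb$ each carry an irreducible $\partial_t$. The paper obtains them by including the time-differentiated multiplier $-A\beta\eps\,\ptxx\ueb$ in the test function for the $L^4$ estimate: pairing this with $\pt\ueb$, $-\beta\ptxx\ueb$, and $\beta^2\ptxxxx\ueb$ yields the positive terms $A\beta\eps\norm{\ptx\ueb}^2_{L^2}$, $A\beta^2\eps\norm{\ptxx\ueb}^2_{L^2}$, $A\beta^3\eps\norm{\ptxxx\ueb}^2_{L^2}$, which after time integration give the families $\{\sqrt{\beta\eps}\,\ptx\ueb\}$, $\{\beta\sqrt{\eps}\,\ptxx\ueb\}$, $\{\beta\sqrt{\beta\eps}\,\ptxxx\ueb\}$ bounded in $L^2(\R^+\times\R)$.

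Your description of the highest-order term reflects this gap: you write that after pulling out one $\px$ from $\beta^2\eta'(\ueb)\ptxxxx\ueb$ you ``redistribute the remaining three $x$-derivatives onto $\eta'(\ueb)$, producing nonlinear polynomial combinations of $\px\ueb,\pxx\ueb,\pxxx\ueb$'', and that the key balance involves $\beta^2(\pxxx\ueb)^2$. But $\ptxxxx\ueb$ has only four $x$-derivatives \emph{and one $t$-derivative}; no amount of spatial integration by parts removes the $\partial_t$, and the commutator terms you produce will always contain a factor of $\ptxxx\ueb$, $\ptxx\ueb$, $\ptx\ueb$ or $\pt\ueb$. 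The paper stops after a single integration by parts, writing $\beta^2\eta'(\ueb)\ptxxxx\ueb=\px(\beta^2\eta'(\ueb)\ptxxx\ueb)-\beta^2\eta''(\ueb)\px\ueb\,\ptxxx\ueb$, and then shows $\beta^2\ptxxx\ueb\to0$ in $L^2$ and $\beta^2\px\ueb\,\ptxxx\ueb\to0$ in $L^1$ using precisely the mixed-derivative bounds above together with $\beta=\mathcal{O}(\eps^4)$. The same mechanism handles the $-\beta\ptxx\ueb$ term. Without the $\ptxx\ueb$ multiplier in your a priori step, these estimates are not available and the compactness argument does not close.
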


Let us prove some a priori estimates on $\ueb$, denoting with $C_0$ the constants which depend only on the initial data.
\begin{lemma}\label{lm:l-2}
For each $t>0$,
\begin{equation}
\label{eq:l-2}
\begin{split}
\norm{\ueb(t,\cdot)}^2_{L^2(\R)}&+\beta\norm{\px\ueb(t,\cdot)}^2_{L^2(\R)}\\
&+\beta^2\norm{\pxx\ueb(t,\cdot)}^2_{L^2(\R)}+2\eps\int_{0}^{t}\norm{\px\ueb(s,\cdot)}^2_{L^2(\R)}ds\le C_{0}.
\end{split}
\end{equation}
In particular, we have
\begin{align}
\label{eq:u-l-infty}
\norm{\ueb(t,\cdot)}_{L^{\infty}(\R)}\le& C_{0}\beta^{-\frac{1}{4}},\\
\label{eq:ux-l-infty}
\norm{\px\ueb(t,\cdot)}_{L^{\infty}(\R)}\le& C_{0}\beta^{-\frac{3}{4}}.
\end{align}
\end{lemma}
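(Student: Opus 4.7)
The plan is to derive the identity by multiplying the equation in \eqref{eq:RKV-eps-beta} by $\ueb$ and integrating over $\R$, treating each term by integration by parts (boundary contributions at $\pm\infty$ vanish because $u_{\eps,\beta,0}\in C^\infty$ and parabolic/fifth-order regularization yields smooth, spatially decaying solutions).

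First I would handle each term separately:
\begin{itemize}
\item $\int_\R \ueb\pt\ueb\,dx=\tfrac12\tfrac{d}{dt}\|\ueb\|_{L^2}^2$.
\item $\int_\R \ueb\,\px(\ueb^2)\,dx=\tfrac{2}{3}\int_\R \px(\ueb^3)\,dx=0$, which kills the nonlinearity.
\item $\beta\int_\R \ueb\,\pxxx\ueb\,dx=-\beta\int_\R \px\ueb\,\pxx\ueb\,dx=-\tfrac{\beta}{2}\int_\R \px((\px\ueb)^2)\,dx=0$, so the KdV dispersion contributes nothing.
\item $-\beta\int_\R \ueb\,\ptxx\ueb\,dx=\beta\int_\R \px\ueb\,\ptx\ueb\,dx=\tfrac{\beta}{2}\tfrac{d}{dt}\|\px\ueb\|_{L^2}^2$.
\item $\beta^2\int_\R \ueb\,\ptxxxx\ueb\,dx=\beta^2\int_\R \pxx\ueb\,\ptxx\ueb\,dx=\tfrac{\beta^2}{2}\tfrac{d}{dt}\|\pxx\ueb\|_{L^2}^2$.
\item $\eps\int_\R \ueb\,\pxx\ueb\,dx=-\eps\|\px\ueb\|_{L^2}^2$.
\end{itemize}
Adding these identities gives
\begin{equation*}
\tfrac12\tfrac{d}{dt}\Bigl(\|\ueb\|_{L^2}^2+\beta\|\px\ueb\|_{L^2}^2+\beta^2\|\pxx\ueb\|_{L^2}^2\Bigr)+\eps\|\px\ueb\|_{L^2}^2=0,
\end{equation*}
and integrating on $[0,t]$ together with the bounds on the initial data in \eqref{eq:u0eps} (which control $\|u_{\eps,\beta,0}\|_{L^2}^2$, $\beta\|\px u_{\eps,\beta,0}\|_{L^2}^2$ and $\beta^2\|\pxx u_{\eps,\beta,0}\|_{L^2}^2$ by $C_0$) yields \eqref{eq:l-2}.

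Next, for the $L^\infty$ bounds I would invoke the one-dimensional Gagliardo--Nirenberg inequality $\|w\|_{L^\infty(\R)}^2\le 2\|w\|_{L^2(\R)}\|\px w\|_{L^2(\R)}$. Applied to $w=\ueb(t,\cdot)$, the estimate \eqref{eq:l-2} provides $\|\ueb\|_{L^2}\le C_0$ and $\|\px\ueb\|_{L^2}\le C_0\beta^{-1/2}$, giving \eqref{eq:u-l-infty}. Applied to $w=\px\ueb(t,\cdot)$, the same estimate supplies $\|\px\ueb\|_{L^2}\le C_0\beta^{-1/2}$ and $\|\pxx\ueb\|_{L^2}\le C_0\beta^{-1}$, yielding \eqref{eq:ux-l-infty}.

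The calculation is essentially routine; the only thing that requires care is the chain of integration by parts that produces clean time derivatives from the two mixed derivative terms $-\beta\ptxx\ueb$ and $\beta^2\ptxxxx\ueb$ and that causes both the convective and KdV dispersion terms to drop out. The key structural point is that the diffusion is the only term producing a sign-definite right-hand side, so no additional smallness condition relating $\eps$ and $\beta$ is needed at this stage; the scaling $\beta=\mathcal{O}(\eps^4)$ in \eqref{eq:beta-eps} will only be invoked later when $L^\infty$ bounds need to be converted into uniform estimates.
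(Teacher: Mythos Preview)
Your proposal is correct and follows essentially the same approach as the paper: multiply \eqref{eq:RKV-eps-beta} by $\ueb$, integrate by parts term by term to obtain the same energy identity, and then integrate in time using \eqref{eq:u0eps}. For the $L^\infty$ bounds the paper writes out the pointwise identity $w^2(t,x)=2\int_{-\infty}^x w\,\px w\,dy$ and applies H\"older, which is exactly the Gagliardo--Nirenberg inequality you invoke, so the two arguments coincide.
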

\begin{proof}
Multiplying \eqref{eq:RKV-eps-beta} by $\ueb$, we have
\begin{equation}
\label{eq:ks4}
\begin{split}
\ueb\pt\ueb &+ 2\ueb^2\px \ueb+  \beta\ueb\pxxx \ueb\\
&-\beta\ueb\ptxx\ueb+\beta^2\ueb\ptxxxx\ueb=\eps\ueb\pxx\ueb.
\end{split}
\end{equation}
Since
\begin{align*}
\int_{\R}\ueb\pt\ueb dx=&\frac{1}{2}\frac{d}{dt}\norm{\ueb(t,\cdot)}^2_{L^2(\R)},\\
2\int_{\R}\ueb^2\px\ueb dx=&0,\\
\beta\int_{\R}\ueb\pxxx \ueb dx= &\int_{\R}\px\ueb\pxx\ueb dx=0,\\
-\beta\int_{\R}\ueb\ptxx\ueb dx=&\frac{\beta}{2}\frac{d}{dt}\norm{\px\ueb(t,\cdot)}^2_{L^2(\R)},\\
\beta^2\int_{\R}\ueb\ptxxxx\ueb dx=& -\beta^2\int_{\R}\px\ueb\ptxxx\ueb dx \\
=& \frac{\beta^2}{2}\frac{d}{dt}\norm{\pxx\ueb(t,\cdot)}^2_{L^2(\R)},\\
\eps\int_{\R}\ueb\pxx\ueb dx=& -\eps\norm{\px\ueb(t,\cdot)}^2_{L^2(\R)}.
\end{align*}
Integrating \eqref{eq:ks4} on $\R$, we get
\begin{equation}
\label{eq:ks6}
\begin{split}
\frac{d}{dt}\norm{\ueb(t,\cdot)}^2_{L^2(\R)} &+ \beta\frac{d}{dt}\norm{\px\ueb(t,\cdot)}^2_{L^2(\R)}\\
&+ \beta^2\frac{d}{dt}\norm{\pxx\ueb(t,\cdot)}^2_{L^2(\R)} +2\eps\norm{\px\ueb(t,\cdot)}^2_{L^2(\R)} =0.
\end{split}
\end{equation}

\eqref{eq:l-2} follows from \eqref{eq:u0eps}, \eqref{eq:ks6} and an integration on $(0,t)$.

We prove \eqref{eq:u-l-infty}. Due to \eqref{eq:l-2} and the H\"older inequality,
\begin{align*}
\ueb^2(t,x)=&2\int_{-\infty}^x \ueb\px\ueb dx \le 2\int_{\R}\vert \ueb\vert \vert \px\ueb\vert dx\\
\le&2\norm{\ueb(t,\cdot)}_{L^2(\R)}\norm{\px\ueb(t,\cdot)}_{L^2(\R)}\le C_{0}\beta^{-\frac{1}{2}}.
\end{align*}
Therefore,
\begin{equation*}
\vert \ueb(t,x)\vert \le C_{0}\beta^{-\frac{1}{4}},
\end{equation*}
which gives \eqref{eq:u-l-infty}.

Finally, we prove \eqref{eq:ux-l-infty}. Thanks to \eqref{eq:l-2} and the H\"older inequality,
\begin{align*}
\px\ueb^2(t,x)=&2\int_{-\infty}^x \px\ueb\pxx\ueb dx \le 2\int_{\R}\vert \px\ueb\vert \vert \pxx\ueb\vert dx\\
\le&2\norm{\px\ueb(t,\cdot)}_{L^2(\R)}\norm{\pxx\ueb(t,\cdot)}_{L^2(\R)}\le C_{0}\beta^{-\frac{1}{2}}C_{0}\beta^{-1}\le C_{0}\beta^{-\frac{3}{2}}.
\end{align*}
Hence,
\begin{equation*}
\norm{\px\ueb(t,\cdot)}_{L^{\infty}(\R)} \le C_{0}\beta^{-\frac{3}{4}},
\end{equation*}
that is \eqref{eq:ux-l-infty}.
\end{proof}
Following \cite[Lemma $2.2$]{Cd}, or \cite[Lemma $2.2$]{CdREM}, or \cite[Lemma $4.2$]{CK}, we prove the following result.
\begin{lemma}\label{lm:ux-l-2}
Assume \eqref{eq:beta-eps}. For each $t>0$,
\begin{itemize}
\item[$i)$] the family $\{\ueb\}_{\eps,\beta}$ is bounded in $L^{\infty}(\R^{+};L^{4}(\R))$;
\item[$ii)$]the families $\{\eps\px\ueb\}_{\eps,\beta},\, \{\sqrt{\beta\eps}\pxx\ueb\}_{\eps,\beta}, \,  \{\eps\sqrt{\beta}\pxx\ueb\}_{\eps,\beta},\,\{\eps\beta\pxxx\ueb\}_{\eps,\beta}$,\\
  $\{\beta\sqrt{\beta}\pxxx\ueb\}_{\eps,\beta},\,\{\beta\pxxxx\ueb\}_{\eps,\beta}$  are bounded in $L^{\infty}(\R^{+};L^2(\R))$;
\item[$iii)$] the families $\{\sqrt{\beta\eps}\ptx\ueb\}_{\eps,\beta},\,\{\beta\sqrt{\eps}\ptxx\ueb\}_{\eps,\beta},\,\{\beta\sqrt{\beta\eps}\ptxxx\ueb\}_{\eps,\beta},$\\
    $\{\eps\beta\sqrt{\beta}\pxxx\ueb \}_{\eps,\beta},\,\{\sqrt{\eps}\ueb\px\ueb\}_{\eps,\beta}, \, \{\eps\sqrt{\eps}\pxx\ueb \}_{\eps,\beta} $ are bounded in\\ $L^2(\R^{+}\times\R)$.
\end{itemize}
Moreover,
\begin{align}
\label{eq:ux-uxx}
\beta\int_0^t\norm{\px\ueb(s,\cdot)\pxx\ueb(s,\cdot)}_{L^1(\R)}ds\le& C_0\eps^2, \quad t>0,\\
\label{eq:uxx-l-2}
\beta^2\int_{0}^t\norm{\pxx\ueb(t,\cdot)}^2_{L^2(\R)}ds \le &C_0\eps^5, \quad t>0.
\end{align}
\end{lemma}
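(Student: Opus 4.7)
The plan is to obtain the estimates through a cascade of weighted energy identities, obtained by testing \eqref{eq:RKV-eps-beta} against the multipliers $\ueb^3$, $-\pxx\ueb$, $\pxxxx\ueb$, and $-\pxxxxxx\ueb$ in succession. Each such test, after integration by parts, yields a relation of schematic form $\frac{d}{dt}\mathcal{E}_k(t)+\mathcal{D}_k(t)=\mathcal{R}_k(t)$, where $\mathcal{E}_k$ is a positive combination of $L^2$-norms of derivatives of $\ueb$ with suitable powers of $\beta$, $\mathcal{D}_k$ is an $\eps$-weighted dissipation, and $\mathcal{R}_k$ collects the nonlinear residuals arising from pairing $\px\ueb^2$ with the multiplier.

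For $(i)$ I would test with $\ueb^3$. The convective contribution vanishes since $\int\ueb^3\px\ueb^2\,dx=0$, the viscosity yields the dissipation $3\eps\int\ueb^2(\px\ueb)^2\,dx$, and each of the three dispersive terms becomes a cubic-in-derivative residual via integration by parts; for instance $\beta\int\ueb^3\pxxx\ueb\,dx=-3\beta\int\ueb^2\px\ueb\pxx\ueb\,dx$. The mixed space-time terms $-\beta\int\ueb^3\ptxx\ueb\,dx$ and $\beta^2\int\ueb^3\ptxxxx\ueb\,dx$ are reorganized by commuting derivatives: part of each becomes the time derivative of a positive functional, the rest a spatial residual bounded through Young's inequality and the pointwise bounds $\|\ueb\|_{L^\infty}\le C_0\beta^{-1/4}$, $\|\px\ueb\|_{L^\infty}\le C_0\beta^{-3/4}$ from Lemma \ref{lm:l-2}. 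The hypothesis $\beta=\mathcal{O}(\eps^4)$ enters exactly here: it converts the negative powers of $\beta$ inherited from these $L^\infty$ estimates into nonnegative powers of $\eps$ that are absorbed by the dissipation, so Gr\"onwall's inequality closes the estimate uniformly in $\eps,\beta$.

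For $(ii)$ and $(iii)$ I would iterate with the three higher multipliers. Testing with $-\pxx\ueb$, for instance, produces
\[
\tfrac12\tfrac{d}{dt}\bigl(\|\px\ueb\|_{L^2(\R)}^2+\beta\|\pxx\ueb\|_{L^2(\R)}^2+\beta^2\|\pxxx\ueb\|_{L^2(\R)}^2\bigr)+\eps\|\pxx\ueb\|_{L^2(\R)}^2=\int(\px\ueb)^3\,dx,
\]
whose right-hand side is absorbed into the dissipation via Gagliardo--Nirenberg interpolation and Young's inequality. Multipliers $\pxxxx\ueb$ and $-\pxxxxxx\ueb$ yield analogous identities giving $L^\infty(L^2)$ control of $\pxx\ueb,\pxxx\ueb,\pxxxx\ueb$ with the advertised $\beta$-weights, and $\eps$-integrated $L^2(\R^+\times\R)$ bounds for $\pxx\ueb,\pxxx\ueb$. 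The mixed time-space derivatives in $(iii)$ are then handled by solving \eqref{eq:RKV-eps-beta} algebraically for $\pt\ueb$ in terms of the spatial derivatives and the nonlinearity, differentiating the resulting expression in $x$, and taking $L^2$-norms term-by-term using the bounds just obtained.

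The auxiliary estimates \eqref{eq:ux-uxx} and \eqref{eq:uxx-l-2} follow. The first is obtained by Cauchy--Schwarz in time applied to $\beta\int_0^t\|\px\ueb\|_{L^2(\R)}\|\pxx\ueb\|_{L^2(\R)}\,ds$ together with the $L^\infty(L^2)$ bound on $\sqrt{\beta}\px\ueb$ and the $L^2(\R^+\times\R)$ bound on $\sqrt{\eps}\pxx\ueb$. The second follows from the interpolation $\|\pxx\ueb\|_{L^2(\R)}^2\le\|\px\ueb\|_{L^2(\R)}\|\pxxx\ueb\|_{L^2(\R)}$ combined with the bounds on $\px\ueb$ and $\pxxx\ueb$ derived above, after using $\beta=\mathcal{O}(\eps^4)$ to collect powers. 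The main obstacle throughout is the bookkeeping of the nonlinear residuals in the cascade: each new multiplier introduces residuals involving one additional spatial derivative, the control of which costs negative powers of $\beta$ through the $L^\infty$ estimates of Lemma \ref{lm:l-2}, and the entire bootstrap closes only because the scaling $\beta=\mathcal{O}(\eps^4)$ is precisely tuned to absorb these factors into the $\eps$-dissipation.
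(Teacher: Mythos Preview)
Your cascade of separate multipliers does not close, and differs essentially from the paper's argument. The paper tests \eqref{eq:RKV-eps-beta} against the \emph{single combined} multiplier $\ueb^{3}-A\beta\eps\,\ptxx\ueb-B\eps^{2}\pxx\ueb+C\beta^{2}\pxxxx\ueb$, with constants $A,B,C$ chosen a posteriori so that all coefficients in the resulting differential inequality are positive. The piece you are missing is the mixed time--space multiplier $-A\beta\eps\,\ptxx\ueb$: paired with $\pt\ueb$, $-\beta\ptxx\ueb$ and $\beta^{2}\ptxxxx\ueb$ it produces the positive dissipations $A\beta\eps\norm{\ptx\ueb}^{2}$, $A\beta^{2}\eps\norm{\ptxx\ueb}^{2}$, $A\beta^{3}\eps\norm{\ptxxx\ueb}^{2}$, and these are precisely what absorb the residuals $3\beta\int\ueb^{2}\px\ueb\,\ptx\ueb\,dx$ and $3\beta^{2}\int\ueb^{2}\px\ueb\,\ptxxx\ueb\,dx$ generated by the $\ueb^{3}$ test. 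Your claim that these residuals split into ``time derivative of a positive functional plus spatial residual'' is incorrect: writing $3\beta\int\ueb^{2}\px\ueb\,\ptx\ueb\,dx=\tfrac{3\beta}{2}\tfrac{d}{dt}\int\ueb^{2}(\px\ueb)^{2}\,dx-3\beta\int\ueb\,\pt\ueb\,(\px\ueb)^{2}\,dx$ leaves a remainder involving $\pt\ueb$, which is not spatial and for which you have no independent bound at this stage.

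The spatial tests are coupled as well. The residual from your $-\pxx\ueb$ test is $-\int(\px\ueb)^{3}\,dx=2\int\ueb\px\ueb\,\pxx\ueb\,dx$; in the paper (with the weight $B\eps^{2}$) this is absorbed via $2B\eps^{2}\bigl|\int\ueb\px\ueb\,\pxx\ueb\,dx\bigr|\le\tfrac{\eps}{2}\norm{\ueb\px\ueb}^{2}+2B^{2}\eps^{3}\norm{\pxx\ueb}^{2}$, where the first piece is swallowed by the $3\eps\norm{\ueb\px\ueb}^{2}$ dissipation coming from the $\ueb^{3}$ test. Run sequentially, neither test closes: estimating $\int(\px\ueb)^{3}\,dx$ via \eqref{eq:ux-l-infty} and \eqref{eq:l-2} alone yields a time-integrated bound of order $\beta^{-3/4}\eps^{-1}\sim\eps^{-4}$, which diverges. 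Your proposal to recover the $\ptx,\ptxx,\ptxxx$ bounds at the end by inverting $1-\beta\pxx+\beta^{2}\pxxxx$ and differentiating is then circular, since it requires $L^{2}$ control of derivatives of $2\ueb\px\ueb+\beta\pxxx\ueb-\eps\pxx\ueb$ that was supposed to come from the cascade. The sixth-order multiplier $-\pxxxxxx\ueb$ is not used in the paper and does not resolve the difficulty.
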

\begin{proof}
Let $A,\,B,\,C$ be some positive constants which will be specified later. Multiplying  \eqref{eq:RKV-eps-beta} by
\begin{equation*}
\ueb^3 -A\beta\eps\ptxx \ueb -B\eps^2\pxx\ueb+C\beta^2\pxxxx\ueb,
\end{equation*}
we have
\begin{equation}
\label{eq:KSmp}
\begin{split}
&\left(\ueb^3 -A\beta\eps\ptxx \ueb -B\eps^2\pxx\ueb+C\beta^2\pxxxx\ueb\right)\pt\ueb\\
&\qquad\quad +2\left(\ueb^3 -A\beta\eps\ptxx \ueb -B\eps^2\pxx\ueb+C\beta^2\pxxxx\ueb\right) \ueb\px\ueb\\
&\qquad\quad +\beta\left(\ueb^3 -A\beta\eps\ptxx \ueb -B\eps^2\pxx\ueb+C\beta^2\pxxxx\ueb\right)\pxxx\ueb\\
&\qquad\quad -\beta\left(\ueb^3 -A\beta\eps\ptxx \ueb -B\eps^2\pxx\ueb+C\beta^2\pxxxx\ueb\right)\ptxx\ueb\\
&\qquad\quad+\beta^2\left(\ueb^3 -A\beta\eps\ptxx \ueb -B\eps^2\pxx\ueb+C\beta^2\pxxxx\ueb\right)\ptxxxx\ueb\\
&\qquad=\eps\left(\ueb^3 -A\beta\eps\ptxx \ueb -B\eps^2\pxx\ueb+C\beta^2\pxxxx\ueb\right)\pxx\ueb.
\end{split}
\end{equation}
We observe that
\begin{equation}
\label{eq:p110}
\begin{split}
&\int_{\R}\left(\ueb^3 -A\beta\eps\ptxx \ueb -B\eps^2\pxx\ueb+C\beta^2\pxxxx\ueb\right)\pt\ueb dx\\
&\qquad= \frac{1}{4}\frac{d}{dt}\norm{\ueb(t,\cdot)}^4_{L^{4}(\R)} +A\beta\eps\norm{\ptx\ueb(t,\cdot)}^2_{L^2(\R)}\\
&\qquad \quad +\frac{B\eps^2}{2}\frac{d}{dt}\norm{\px\ueb(t,\cdot)}^2_{L^2(\R)}+\frac{C\beta^2}{2}\frac{d}{dt}\norm{\pxx\ueb(t,\cdot)}^2_{L^2(\R)}.
\end{split}
\end{equation}
We have that
\begin{equation}
\label{eq:p-100}
\begin{split}
&2\int_{\R}\left(\ueb^3 -A\beta\eps\ptxx \ueb -B\eps^2\pxx\ueb+C\beta^2\pxxxx\ueb\right) \ueb\px\ueb dx\\
&\qquad =-2A\beta\eps\int_{\R} \ueb\px\ueb\ptxx \ueb dx -2B\eps^2 \int_{\R}\ueb\px\ueb\pxx\ueb dx \\
&\qquad\quad -2C\beta^2\int_{\R}(\px\ueb)^2\pxxx\ueb dx -2C\beta^2\int_{\R} \ueb\pxx\ueb\pxxx\ueb dx.
\end{split}
\end{equation}
Since
\begin{equation}
\label{eq:p101}
\begin{split}
-2C\beta^2\int_{\R}(\px\ueb)^2\pxxx\ueb &-2C\beta^2\int_{\R}\ueb\pxx\ueb\pxxx\ueb dx\\
=&5C\beta^2\int_{\R}(\pxx\ueb)^2\px\ueb dx\\
=&-\frac{5\beta^2}{2}\int_{\R}(\px\ueb)^2\pxxx\ueb dx,
\end{split}
\end{equation}
it follows from \eqref{eq:p-100} and \eqref{eq:p101} that
\begin{equation}
\label{eq:p102}
\begin{split}
&2\int_{\R}\left(\ueb^3 -A\beta\eps\ptxx \ueb -B\eps^2\pxx\ueb+C\beta^2\pxxxx\ueb\right) \ueb\px\ueb dx\\
&\qquad =-2A\beta\eps\int_{\R} \ueb\px\ueb\ptxx \ueb dx -2B\eps^2 \int_{\R}\ueb\px\ueb\pxx\ueb dx \\
&\qquad\quad -\frac{5C\beta^2}{2}\int_{\R}(\px\ueb)^2\pxxx\ueb dx\\.
\end{split}
\end{equation}
We observe
\begin{equation}
\label{eq:p111}
\begin{split}
&\beta\int_{\R}\left(\ueb^3 -A\beta\eps\ptxx \ueb -B\eps^2\pxx\ueb+C\beta^2\pxxxx\ueb\right)\pxxx\ueb dx\\
&\qquad\quad= -3\beta\int_{\R}\ueb^2\px\ueb\pxx\ueb dx -A\beta^2\eps\int_{\R}\pxxx\ueb\ptxx \ueb dx.
\end{split}
\end{equation}
We get
\begin{equation}
\label{eq:p112}
\begin{split}
&-\beta\int_{\R}\left(\ueb^3 -A\beta\eps\ptxx \ueb -B\eps^2\pxx\ueb+C\beta^2\pxxxx\ueb\right)\ptxx\ueb dx\\
&\qquad=3\beta\int_{\R}\ueb^2\px\ueb\ptx\ueb dx +A\beta^2\eps\norm{\ptxx\ueb(t,\cdot)}^2_{L^2(\R)}\\
&\qquad\quad+\frac{B\beta\eps^2}{2}\frac{d}{dt}\norm{\pxx\ueb(t,\cdot)}^2_{L^2(\R)}+\frac{C\beta^3}{2}\frac{d}{dt}\norm{\pxxx\ueb(t,\cdot)}^2_{L^2(\R)}.
\end{split}
\end{equation}
We have that
\begin{equation}
\label{eq:p113}
\begin{split}
&\beta^2\int_{\R}\left(\ueb^3 -A\beta\eps\ptxx \ueb -B\eps^2\pxx\ueb+C\beta^2\pxxxx\ueb\right)\ptxxxx\ueb dx\\
&\qquad= -3\beta^2\int_{\R}\ueb^2\px\ueb\ptxxx\ueb dx +A\beta^3\eps\norm{\ptxxx\ueb(t,\cdot)}^2_{L^2(\R)}\\
&\qquad\quad +\frac{B\beta^2\eps^2}{2}\frac{d}{dt}\norm{\pxxx\ueb(t,\cdot)}^2_{L^2(\R)}+\frac{C\beta^4}{2}\frac{d}{dt}\norm{\pxxxx\ueb(t,\cdot)}^2_{L^2(\R)}.
\end{split}
\end{equation}
Moreover,
\begin{equation}
\label{eq:p114}
\begin{split}
&\eps\int_{\R}\left(\ueb^3 -A\beta\eps\ptxx \ueb -B\eps^2\pxx\ueb+C\beta^2\pxxxx\ueb\right)\pxx\ueb dx\\
&\qquad= -3\eps\norm{\ueb(t,\cdot)\px\ueb(t,\cdot)}^2_{L^2(\R)} -\frac{A\beta\eps}{2}\frac{d}{dt}\norm{\pxx\ueb(t,\cdot)}^2_{L^2(\R)}\\
&\qquad \quad -\eps^3B\norm{\pxx\ueb(t,\cdot)}^2_{L^2(\R)}-\beta^2\eps C\norm{\pxxx\ueb(t,\cdot)}^2_{L^2(\R)}.
\end{split}
\end{equation}
It follows from \eqref{eq:p110}, \eqref{eq:p102}, \eqref{eq:p111}, \eqref{eq:p112}, \eqref{eq:p113}, \eqref{eq:p114}, and an integration of \eqref{eq:KSmp} on $\R$ that
\begin{equation}
\label{eq:p120}
\begin{split}
&\frac{d}{dt}\left(\frac{1}{4}\norm{\ueb(t,\cdot)}^4_{L^{4}(\R)} +\frac{\left(A\beta\eps+B\beta\eps^2+C\beta^2\right)}{2}\norm{\pxx\ueb(t,\cdot)}^2_{L^2(\R)}       \right)\\
&\qquad\quad +\frac{d}{dt}\left(\frac{B\eps^2}{2}\norm{\px\ueb(t,\cdot)}^2_{L^2(\R)}+\frac{\left(B\beta^2\eps^2+C\beta^3\right)}{2}\norm{\pxxx\ueb(t,\cdot)}^2_{L^2(\R)}\right)\\ &\qquad\quad +\frac{C\beta^4}{2}\frac{d}{dt}\norm{\pxxxx\ueb(t,\cdot)}^2_{L^2(\R)}+A\beta\eps\norm{\ptx\ueb(t,\cdot)}_{L^2(\R)}\\
&\qquad\quad +A\beta^2\eps\norm{\ptxx\ueb(t,\cdot)}^2_{L^2(\R)}+A\beta^3\eps\norm{\ptxxx\ueb(t,\cdot)}^2_{L^2(\R)}\\
&\qquad\quad +3\eps\norm{\ueb(t,\cdot)\px\ueb(t,\cdot)}^2_{L^2(\R)}+\eps^3B\norm{\pxx\ueb(t,\cdot)}^2_{L^2(\R)}\\
&\qquad\quad +\beta^2\eps C\norm{\pxxx\ueb(t,\cdot)}^2_{L^2(\R)}\\
&\qquad = 2A\beta\eps\int_{\R}\ueb\px\ueb\ptxx\ueb dx +2B\eps^2\int_{\R}\ueb\px\ueb\pxx\ueb dx\\
&\qquad\quad \frac{5C\beta^2}{2}\int_{\R}(\px\ueb)^2\pxxx\ueb dx -3\beta\int_{\R}\ueb^2\px\ueb\pxx\ueb dx\\
&\qquad\quad +A\beta^2\eps\int_{\R}\pxxx\ueb\ptxx\ueb - 3\beta \int_{\R}\ueb^2\px\ueb\ptx\ueb dx\\
&\qquad\quad +3\beta^2\int_{\R}\ueb^2\px\ueb\ptxxx\ueb dx.
\end{split}
\end{equation}
Due to the Young inequality,
\begin{equation}
\label{eq:You23}
\begin{split}
&2A\beta\eps\left\vert \int_{\R}\ueb\px\ueb\ptxx\ueb dx \right\vert\le A\eps\int_{\R}\left\vert 2 \ueb\px\ueb \right\vert \vert \beta \ptxx\ueb\vert dx\\
&\qquad\le 2A\eps\norm{\ueb(t,\cdot)\px\ueb(t,\cdot)}^2_{L^2(\R)}+\frac{A\beta^2\eps}{2}\norm{\ptxx\ueb(t,\cdot)}^2_{L^2(\R)},\\
&2B\eps^2\left\vert\int_{\R}\ueb\px\ueb\pxx\ueb dx\right\vert \le \int_{\R}\left\vert \eps^{\frac{1}{2}}\ueb\px\ueb\right\vert \left\vert\eps^{\frac{3}{2}} 2B\pxx\ueb\right\vert dx\\
&\qquad \le \frac{\eps}{2}\norm{\ueb(t,\cdot)\px\ueb(t,\cdot)}^2_{L^2(\R)} +2B^2\eps^3 \norm{\pxx\ueb(t,\cdot)}^2_{L^2(\R)}.
\end{split}
\end{equation}
Hence, from \eqref{eq:p120},
\begin{equation}
\label{eq:237}
\begin{split}
&\frac{d}{dt}\left(\frac{1}{4}\norm{\ueb(t,\cdot)}^4_{L^{4}(\R)} +\frac{\left(A\beta\eps+B\beta\eps^2+C\beta^2\right)}{2}\norm{\pxx\ueb(t,\cdot)}^2_{L^2(\R)}       \right)\\
&\qquad\quad +\frac{d}{dt}\left(\frac{B\eps^2}{2}\norm{\px\ueb(t,\cdot)}^2_{L^2(\R)}+\frac{\left(B\beta^2\eps^2+C\beta^3\right)}{2}\norm{\pxxx\ueb(t,\cdot)}^2_{L^2(\R)}\right)\\ &\qquad\quad +\frac{C\beta^4}{2}\frac{d}{dt}\norm{\pxxxx\ueb(t,\cdot)}^2_{L^2(\R)}+A\beta\eps\norm{\ptx\ueb(t,\cdot)}^2_{L^2(\R)}\\
&\qquad\quad +\frac{A\beta^2\eps}{2}\norm{\ptxx\ueb(t,\cdot)}^2_{L^2(\R)}+A\beta^3\eps\norm{\ptxxx\ueb(t,\cdot)}^2_{L^2(\R)}\\
&\qquad\quad +\left(\frac{5}{2} -2A\right)\eps\norm{\ueb(t,\cdot)\px\ueb(t,\cdot)}^2_{L^2(\R)}+\beta^2\eps C\norm{\pxxx\ueb(t,\cdot)}^2_{L^2(\R)}\\
&\qquad\quad +\left(B-2B^2\right)\eps^3\norm{\pxx\ueb(t,\cdot)}^2_{L^2(\R)}\\
&\qquad \le \frac{5C\beta^2}{2}\int_{\R}(\px\ueb)^2\vert\pxxx\ueb\vert dx +3\beta\int_{\R} \ueb^2\vert\px\ueb\vert\pxx\ueb\vert dx\\
&\qquad\quad +A \beta^2\eps \int_{\R}\vert\pxxx\ueb\vert\vert\ptxx\vert dx +3\beta\int_{\R}\ueb^2\vert\px\ueb\vert\vert\ptx\ueb\vert dx\\
&\qquad\quad +3\beta^2\int_{\R} \ueb^2 \vert \px\ueb\vert \vert\ptxxx\ueb\vert dx.
\end{split}
\end{equation}
From \eqref{eq:beta-eps}, we have
\begin{equation}
\label{eq:beta-eps-1}
\beta\le D^2\eps^4,
\end{equation}
where $D$ is a positive constant which will be specified later. It follows from \eqref{eq:ux-l-infty}, \eqref{eq:beta-eps-1} and the Young inequality  that
\begin{equation}
\label{eq:p255}
\begin{split}
&\frac{5C\beta^2}{2}\int_{\R}(\px\ueb)^2\vert\pxxx\ueb\vert dx= \beta^2C\int_{\R}\frac{5}{2\eps^\frac{1}{2}}(\px\ueb)^2\left\vert \eps^{\frac{1}{2}}\pxxx\ueb\right\vert\\
&\qquad \le \frac{25C\beta^2}{8\eps}\int_{\R}(\px\ueb)^4 dx + \frac{C\beta^2\eps}{2}\norm{\pxxx\ueb(t,\cdot)}^2_{L^2(\R)}\\
&\qquad \le \frac{25C}{8\eps}\beta^2\norm{\px\ueb(t,\cdot)}^2_{L^{\infty}(\R)}\norm{\px\ueb(t,\cdot)}^2_{L^2(\R)} + \frac{C\beta^2\eps}{2}\norm{\pxxx\ueb(t,\cdot)}^2_{L^2(\R)}\\
&\qquad \le \frac{C_{0}\beta^{\frac{1}{2}}}{\eps}\norm{\px\ueb(t,\cdot)}^2_{L^2(\R)} + \frac{C\beta^2\eps}{2}\norm{\pxxx\ueb(t,\cdot)}^2_{L^2(\R)}\\
&\qquad \le C_0D\eps \norm{\px\ueb(t,\cdot)}^2_{L^2(\R)} + \frac{C\beta^2\eps}{2}\norm{\pxxx\ueb(t,\cdot)}^2_{L^2(\R)}.
\end{split}
\end{equation}
Due to \eqref{eq:u-l-infty}, \eqref{eq:beta-eps-1} and the Young inequality,
\begin{equation}
\label{eq:p256}
\begin{split}
&3\beta\int_{\R} \ueb^2\vert\px\ueb\vert\pxx\ueb\vert dx\le 3\beta \norm{\ueb(t,\cdot)}^2_{L^{\infty}(\R)}\int_{\R}\vert\px\ueb\vert\pxx\ueb\vert dx\\
&\qquad\le C_{0}\beta^{\frac{1}{2}}\int_{\R}\vert\px\ueb\vert\vert\pxx\ueb\vert dx\le\int_{\R}\left\vert \eps^{\frac{1}{2}}\px\ueb\right\vert\left\vert C_0 D\eps^{\frac{3}{2}}\pxx\ueb\right\vert dx\\
&\qquad \le \eps \norm{\px\ueb(t,\cdot)}^2_{L^2(\R)} +D^2C^2_{0}\eps^3\norm{\pxx\ueb(t,\cdot)}^2_{L^2(\R)}.
\end{split}
\end{equation}
Thanks to the Young inequality,
\begin{equation}
\label{eq:p257}
\begin{split}
&A \beta^2\eps \int_{\R}\vert\pxxx\ueb\vert\vert\ptxx\ueb\vert dx=A \beta^2\eps \int_{\R}\left\vert 2\pxxx\ueb\right\vert\left\vert\frac{1}{2}\ptxx\ueb\right\vert dx\\
&\qquad\le 2A\beta^2\eps\norm{\pxxx\ueb(t,\cdot)}^2_{L^2(\R)}+\frac{A\beta^2\eps}{8}\norm{\ptxx\ueb(t,\cdot)}^2_{L^2(\R)}.
\end{split}
\end{equation}
It follows from \eqref{eq:u-l-infty}, \eqref{eq:beta-eps-1} and the Young inequality that
\begin{equation}
\label{eq:p289}
\begin{split}
&3\beta\int_{\R}\ueb^2\vert\px\ueb\vert\vert\ptx\ueb\vert dx = \beta\int_{\R}\left\vert\frac{3\ueb^2\px\ueb}{\eps^{\frac{1}{2}}A^{\frac{1}{2}}}\right\vert\left\vert\eps^{\frac{1}{2}}A^{\frac{1}{2}}\ptx\ueb\right\vert dx\\
&\qquad\le\frac{9\beta}{2\eps A}\int_{\R}\ueb^4(\px\ueb)^2 dx + \frac{\beta\eps A}{2}\norm{\ptx\ueb(t,\cdot)}^2_{L^2(\R)}\\
&\qquad\le \frac{9}{2\eps A}\beta\norm{\ueb(t,\cdot)}^2_{L^{\infty}(\R)}\norm{\ueb(t,\cdot)\px\ueb(t,\cdot)}^2_{L^2(\R)}\\
&\qquad\quad + \frac{\beta\eps A}{2}\norm{\ptx\ueb(t,\cdot)}^2_{L^2(\R)}\\
&\qquad \le\frac{C_{0}\beta^{\frac{1}{2}}}{\eps A}\norm{\ueb(t,\cdot)\px\ueb(t,\cdot)}^2_{L^2(\R)}+ \frac{\beta\eps A}{2}\norm{\ptx\ueb(t,\cdot)}^2_{L^2(\R)}\\    &\qquad \le \frac{C_0 D}{A}\eps\norm{\ueb(t,\cdot)\px\ueb(t,\cdot)}^2_{L^2(\R)}+ \frac{\beta\eps A}{2}\norm{\ptx\ueb(t,\cdot)}^2_{L^2(\R)}.
\end{split}
\end{equation}
Again by \eqref{eq:u-l-infty}, \eqref{eq:beta-eps-1} and the Young inequality,
\begin{equation}
\label{eq:290}
\begin{split}
&3\beta^2\int_{\R} \ueb^2 \vert \px\ueb\vert \vert\ptxxx\ueb\vert dx =\int_{\R}\left\vert\frac{3\beta^{\frac{1}{2}}\ueb^2\px\ueb}{\eps^{\frac{1}{2}}A^{\frac{1}{2}}}\right\vert \left \vert\beta^{\frac{3}{2}}\eps^{\frac{1}{2}}A^{\frac{1}{2}}\ptxxx\ueb\right\vert dx\\
&\qquad \le \frac{9\beta}{2\eps A}\int_{\R}\ueb^4(\px\ueb)^2 dx + \frac{\beta^3\eps A}{2} \norm{\ptxxx\ueb(t,\cdot)}^2_{L^2(\R)}\\
&\qquad \le \frac{9}{2\eps A}\beta\norm{\ueb(t,\cdot)}^2_{L^{\infty}(\R)}\norm{\ueb(t,\cdot)\px\ueb(t,\cdot)}^2_{L^2(\R)}\\
&\qquad\quad +\frac{\beta^3\eps A}{2} \norm{\ptxxx\ueb(t,\cdot)}^2_{L^2(\R)}\\
&\qquad \le \frac{C_{0}\beta^{\frac{1}{2}}}{\eps A}\norm{\ueb(t,\cdot)\px\ueb(t,\cdot)}^2_{L^2(\R)}+ \frac{\beta^3\eps A}{2} \norm{\ptxxx\ueb(t,\cdot)}^2_{L^2(\R)}\\
&\qquad \le \frac{C_{0}D}{A}\eps\norm{\ueb(t,\cdot)\px\ueb(t,\cdot)}^2_{L^2(\R)}+ \frac{\beta^3\eps A}{2} \norm{\ptxxx\ueb(t,\cdot)}^2_{L^2(\R)}.
\end{split}
\end{equation}
From \eqref{eq:237}, \eqref{eq:p255}, \eqref{eq:p256}, \eqref{eq:p257}, \eqref{eq:p289} and \eqref{eq:290}, we get
\begin{equation}
\label{eq:p300}
\begin{split}
&\frac{d}{dt}\left(\frac{1}{4}\norm{\ueb(t,\cdot)}^4_{L^{4}(\R)} +\frac{\left(A\beta\eps+B\beta\eps^2+C\beta^2\right)}{2}\norm{\pxx\ueb(t,\cdot)}^2_{L^2(\R)}       \right)\\
&\qquad\quad +\frac{d}{dt}\left(\frac{B\eps^2}{2}\norm{\px\ueb(t,\cdot)}^2_{L^2(\R)}+\frac{\left(B\beta^2\eps^2+C\beta^3\right)}{2}\norm{\pxxx\ueb(t,\cdot)}^2_{L^2(\R)}\right)\\ &\qquad\quad +\frac{C\beta^4}{2}\frac{d}{dt}\norm{\pxxxx\ueb(t,\cdot)}^2_{L^2(\R)}+\frac{\beta\eps A}{2}\norm{\ptx\ueb(t,\cdot)}^2_{L^2(\R)}\\
&\qquad\quad + \frac{3A\beta^2\eps}{8}\norm{\ptxx\ueb(t,\cdot)}^2_{L^2(\R)}+\frac{A\beta^3\eps}{2}\norm{\ptxxx\ueb(t,\cdot)}^2_{L^2(\R)}\\
&\qquad\quad +\beta^2\eps\left(\frac{C}{2} -2A\right)\norm{\pxxx\ueb(t,\cdot)}^2_{L^2(\R)}\\
&\qquad\quad +\left(\frac{5}{2} -2A-\frac{C_{0}D}{A} \right)\eps\norm{\ueb(t,\cdot)\px\ueb(t,\cdot)}^2_{L^2(\R)}\\
&\qquad\quad +\left(B-2B^2-D^2C^2_{0}\right)\eps^3\norm{\pxx\ueb(t,\cdot)}^2_{L^2(\R)}\\
&\qquad \le C_{0}\eps\norm{\px\ueb(t,\cdot)}^2_{L^2(\R)}.
\end{split}
\end{equation}
We search $A,\,B,\, C$ such that
\begin{equation*}
\begin{cases}
\displaystyle \frac{5}{2} -2A- \frac{C_0D}{A} >0,\\
\displaystyle B-2B^2 -D^2C^2_{0} >0,\\
\displaystyle \frac{C}{2} -2A>0,
\end{cases}
\end{equation*}
that is
\begin{equation}
\label{eq:p302}
\begin{cases}
\displaystyle 4A^2 - 5A +2C_{0} D<0,\\
\displaystyle 2B^2 -B -D^2C_{0}^2<0,\\
\displaystyle C>4A.
\end{cases}
\end{equation}
We choose
\begin{equation}
\label{eq:scet-C}
C=6A.
\end{equation}
The first inequality of \eqref{eq:p302} admits a solution, if
\begin{equation*}
25-32C_{0}D>0,
\end{equation*}
that is
\begin{equation}
\label{eq:cond-1}
D<\frac{25}{32C_0}.
\end{equation}
The second inequality of \eqref{eq:p302} admits a solution, if
\begin{equation*}
1-8D^2C^2_{0} >0,
\end{equation*}
that is
\begin{equation}
\label{eq:cond-2}
D<\frac{\sqrt{2}}{4C_{0}}.
\end{equation}
It follows from \eqref{eq:cond-1} and \eqref{eq:cond-2} that
\begin{equation}
\label{eq:sce-D}
D<\min\left\{\frac{25}{32C_0},\frac{\sqrt{2}}{4C_{0}}\right\}= \frac{\sqrt{2}}{4C_{0}}.
\end{equation}
Therefore, from \eqref{eq:p302}, \eqref{eq:scet-C} and \eqref{eq:sce-D}, we have that there exist $0<A_1<A_2$ and $0<B_1<B_2$, such that choosing
\begin{equation}
\label{eq:con-A-B-C}
A_1<A<A_2,\quad B_1<B<B_2,\quad C=6A,
\end{equation}
\eqref{eq:p302} holds.

\eqref{eq:p300} and \eqref{eq:p302} give
\begin{align*}
&\frac{d}{dt}\left(\frac{1}{4}\norm{\ueb(t,\cdot)}^4_{L^{4}(\R)} +\frac{\left(A\beta\eps+B\beta\eps^2+6A\beta^2\right)}{2}\norm{\pxx\ueb(t,\cdot)}^2_{L^2(\R)}       \right)\\
&\qquad\quad +\frac{d}{dt}\left(\frac{B\eps^2}{2}\norm{\px\ueb(t,\cdot)}^2_{L^2(\R)}+\frac{\left(B\beta^2\eps^2+6A\beta^3\right)}{2}\norm{\pxxx\ueb(t,\cdot)}^2_{L^2(\R)}\right)\\ &\qquad\quad +3A\beta^4\frac{d}{dt}\norm{\pxxxx\ueb(t,\cdot)}^2_{L^2(\R)}+\frac{\beta\eps A}{2}\norm{\ptx\ueb(t,\cdot)}^2_{L^2(\R)}\\
&\qquad\quad + \frac{3A\beta^2\eps}{8}\norm{\ptxx\ueb(t,\cdot)}^2_{L^2(\R)}+\frac{A\beta^3\eps}{2}\norm{\ptxxx\ueb(t,\cdot)}^2_{L^2(\R)}\\
&\qquad\quad +\beta^2\eps A\norm{\pxxx\ueb(t,\cdot)}^2_{L^2(\R)} +\eps K_1\norm{\ueb(t,\cdot)\px\ueb(t,\cdot)}^2_{L^2(\R)}\\
&\qquad\quad +\eps^3 K\norm{\pxx\ueb(t,\cdot)}^2_{L^2(\R)}\le C_{0}\eps\norm{\px\ueb(t,\cdot)}^2_{L^2(\R)},
\end{align*}
for some $K_1,\,K_2>0$.

\eqref{eq:u0eps}, \eqref{eq:l-2} and an integration on $(0,t)$ give
\begin{align*}
&\frac{1}{4}\norm{\ueb(t,\cdot)}^4_{L^{4}(\R)}+ \frac{\left(A\beta\eps+B\beta\eps^2+6A\beta^2\right)}{2}\norm{\pxx\ueb(t,\cdot)}^2_{L^2(\R)}\\
&\qquad\quad +\frac{B\eps^2}{2}\norm{\px\ueb(t,\cdot)}^2_{L^2(\R)}+\frac{\left(B\beta^2\eps^2+6A\beta^3\right)}{2}\norm{\pxxx\ueb(t,\cdot)}^2_{L^2(\R)}\\
&\qquad\quad +3A\beta^4\norm{\pxxxx\ueb(t,\cdot)}^2_{L^2(\R)}+\frac{\beta\eps A}{2}\int_{0}^{t}\norm{\ptx\ueb(s,\cdot)}^2_{L^2(\R)}ds\\
&\qquad\quad + \frac{3A\beta^2\eps}{8}\int_{0}^{t}\norm{\ptxx\ueb(s,\cdot)}^2_{L^2(\R)}ds+\frac{A\beta^3\eps}{2}\int_{0}^{t}\norm{\ptxxx\ueb(s,\cdot)}^2_{L^2(\R)}ds\\
&\qquad\quad +\beta^2\eps A\int_{0}^{t}\norm{\pxxx\ueb(s,\cdot)}^2_{L^2(\R)}ds +\eps K_1\int_{0}^{t}\norm{\ueb(s,\cdot)\px\ueb(t,\cdot)}^2_{L^2(\R)}ds\\
&\qquad\quad +\eps^3 K_2\int_{0}^{t}\norm{\pxx\ueb(s,\cdot)}^2_{L^2(\R)}ds\\
&\qquad \le C_{0} +C_{0}\eps\int_{0}^{t}\norm{\px\ueb(s,\cdot)}^2_{L^2(\R)}ds\le C_0.
\end{align*}
Hence,
\begin{align*}
\norm{\ueb(t,\cdot)}_{L^{4}(\R)}\le &C_{0}, \\
\eps\norm{\px\ueb(t,\cdot)}_{L^2(\R)}\le &C_{0},\\
\sqrt{\beta\eps}\norm{\pxx\ueb(t,\cdot)}_{L^2(\R)}\le & C_{0},\\
\eps\sqrt{\beta}\norm{\pxx\ueb(t,\cdot)}_{L^2(\R)}\le & C_{0},\\
\beta\norm{\pxx\ueb(t,\cdot)}_{L^2(\R)}\le & C_{0},\\
\beta\eps\norm{\pxxx\ueb(t,\cdot)}_{L^2(\R)}\le & C_{0},\\
\beta\sqrt{\beta}\norm{\pxxx\ueb(t,\cdot)}_{L^2(\R)}\le & C_{0},\\
\beta\norm{\pxxxx\ueb(t,\cdot)}_{L^2(\R)}\le & C_{0},\\
\beta\eps \int_{0}^{t}\norm{\ptx\ueb(s,\cdot)}^2_{L^2(\R)}ds \le&C_{0},\\
\beta^2\eps \int_{0}^{t}\norm{\ptxx\ueb(s,\cdot)}^2_{L^2(\R)}ds\le&C_{0},\\
\beta^3\eps\int_{0}^{t}\norm{\ptxxx\ueb(s,\cdot)}^2_{L^2(\R)}ds\le&C_{0},\\
\beta^2\eps\int_{0}^{t}\norm{\pxxx\ueb(s,\cdot)}^2_{L^2(\R)}ds\le &C_{0},\\
\eps\int_{0}^{t}\norm{\ueb(s,\cdot)\px\ueb(t,\cdot)}^2_{L^2(\R)}ds\le &C_{0},\\
\eps^3\int_{0}^{t}\norm{\pxx\ueb(s,\cdot)}^2_{L^2(\R)}ds\le &C_{0},
\end{align*}
for every $t>0$.

Arguing as \cite[Lemma $2.2$]{Cd}, we have \eqref{eq:ux-uxx} and \eqref{eq:uxx-l-2}.
\end{proof}
To prove Theorem \ref{th:main}. The following technical lemma is needed  \cite{Murat:Hneg}.
\begin{lemma}
\label{lm:1}
Let $\Omega$ be a bounded open subset of $
\R^2$. Suppose that the sequence $\{\mathcal
L_{n}\}_{n\in\mathbb{N}}$ of distributions is bounded in
$W^{-1,\infty}(\Omega)$. Suppose also that
\begin{equation*}
\mathcal L_{n}=\mathcal L_{1,n}+\mathcal L_{2,n},
\end{equation*}
where $\{\mathcal L_{1,n}\}_{n\in\mathbb{N}}$ lies in a
compact subset of $H^{-1}_{loc}(\Omega)$ and
$\{\mathcal L_{2,n}\}_{n\in\mathbb{N}}$ lies in a
bounded subset of $\mathcal{M}_{loc}(\Omega)$. Then $\{\mathcal
L_{n}\}_{n\in\mathbb{N}}$ lies in a compact subset of $H^{-1}_{loc}(\Omega)$.
\end{lemma}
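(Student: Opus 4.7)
The plan is to localise via a cutoff and then run the Murat--Tartar compactness scheme: the bounded-measure piece sits in a compact subset of $W^{-1,p}(\Omega)$ for some $p<2$ by Sobolev duality, and the uniform $W^{-1,\infty}$-bound on the whole sequence lets one interpolate back up to $H^{-1}(\Omega)$. First I would fix a bounded open $\Omega'$ with $\overline{\Omega'}\subset\Omega$ and $\varphi\in C_c^\infty(\Omega)$ with $\varphi\equiv 1$ on $\Omega'$; compactness of $\{\mathcal L_n\}$ in $H^{-1}_{\mathrm{loc}}(\Omega)$ is equivalent to compactness of $\{\varphi\mathcal L_n\}$ in $H^{-1}(\Omega)$, and multiplication by $\varphi$ preserves all three hypotheses.

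The key two-dimensional input is the Sobolev embedding $W^{1,p'}_0(\Omega)\hookrightarrow C_0(\Omega)$, valid for every $p'>2$, which is compact on the bounded set $\Omega$ by Rellich--Kondrachov. Dualising yields the compact embedding
\[
\mathcal M(\Omega)\hookrightarrow W^{-1,p}(\Omega),\qquad 1<p<2,
\]
so $\{\varphi\mathcal L_{2,n}\}$ is precompact in $W^{-1,p}(\Omega)$ for every such $p$. Since $H^{-1}(\Omega)=W^{-1,2}(\Omega)$ embeds continuously into $W^{-1,p}(\Omega)$, the hypothesised $H^{-1}$-compactness of $\{\varphi\mathcal L_{1,n}\}$ transfers to $W^{-1,p}(\Omega)$; adding the pieces, $\{\varphi\mathcal L_n\}$ is precompact in $W^{-1,p}(\Omega)$.

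To upgrade $W^{-1,p}$-precompactness to $H^{-1}$-precompactness I would invoke the interpolation inequality
\[
\|u\|_{H^{-1}(\Omega)}\le C\,\|u\|_{W^{-1,p}(\Omega)}^{\theta}\,\|u\|_{W^{-1,\infty}(\Omega)}^{1-\theta},\qquad \theta=p/2,
\]
a standard consequence of real interpolation between $W^{-1,p}$ and $W^{-1,\infty}$. Applying it to $\varphi(\mathcal L_n-\mathcal L_m)$ along a $W^{-1,p}$-Cauchy subsequence, the first factor tends to zero while the second stays bounded by the $W^{-1,\infty}$-hypothesis on $\{\mathcal L_n\}$; this yields a Cauchy subsequence in $H^{-1}(\Omega)$, and therefore $\{\mathcal L_n\}$ is precompact in $H^{-1}_{\mathrm{loc}}(\Omega)$.

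The main technical obstacle is justifying the $L^\infty$-endpoint interpolation inequality. The cleanest route is real interpolation of Sobolev spaces (Bergh--L\"{o}fstr\"{o}m) or, equivalently, representing $W^{-1,q}(\Omega)$ as a quotient of $L^q(\Omega)^{d+1}$ via the divergence and invoking Riesz--Thorin for Lebesgue spaces; once that inequality is in hand the rest of the argument is routine. The dimensional restriction $\Omega\subset\R^2$ serves only to make some $p<2$ available with the compact embedding $\mathcal M\hookrightarrow W^{-1,p}$; no finer use of the two-dimensional geometry is needed.
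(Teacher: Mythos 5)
The paper does not prove this lemma at all: it is quoted as a known result with a citation to Murat's 1981 paper, so there is no in-paper argument to compare against. Your proposal reconstructs the classical proof, and its skeleton is sound: the cutoff reduction is legitimate (multiplication by $\varphi\in C_c^\infty$ preserves all three hypotheses), the compact embedding $\mathcal{M}(\Omega)\hookrightarrow W^{-1,p}(\Omega)$ for $1<p<2$ follows by Schauder duality from the compact embedding $W_0^{1,p'}(\Omega)\hookrightarrow C_0(\overline\Omega)$, $p'>2=\dim$, and summing the two pieces gives precompactness of $\{\varphi\mathcal{L}_n\}$ in $W^{-1,p}(\Omega)$; the final interpolation-along-a-Cauchy-subsequence step is the standard closing move.

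The one fragile point is exactly the one you flag, and your proposed repairs are not quite as routine as you suggest: the inequality $\norm{u}_{H^{-1}}\le C\norm{u}_{W^{-1,p}}^{\theta}\norm{u}_{W^{-1,\infty}}^{1-\theta}$ with the genuine $L^\infty$ endpoint is delicate, because the usual proof interpolates through the divergence representation $u=f_0+\sum_j\partial_j f_j$, and the bounded linear lifting $u\mapsto(f_0,\dots,f_d)$ (via $(I-\Delta)^{-1}$ and its first derivatives) is a Calder\'on--Zygmund construction that is bounded on $L^q$ only for $1<q<\infty$, failing at $q=\infty$; real interpolation of these quotient/dual spaces at the endpoint needs a consistent near-optimal representative, which is precisely what is missing there. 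The clean fix avoids the endpoint altogether: on the bounded set $\Omega$ one has $W^{1,q'}_0(\Omega)\hookrightarrow W^{1,1}_0(\Omega)$ for every $q'>1$, hence by duality $W^{-1,\infty}(\Omega)\hookrightarrow W^{-1,q}(\Omega)$ for every finite $q$, so your $W^{-1,\infty}$ bound yields a $W^{-1,q}$ bound for some fixed finite $q>2$, and you interpolate between the two \emph{finite} exponents $p<2<q$ with $\tfrac12=\tfrac{\theta}{p}+\tfrac{1-\theta}{q}$, where the lifting argument and Riesz--Thorin work without obstruction. With that one-line modification your proof is complete and coincides with Murat's original argument.
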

Moreover, we consider the following definition.
\begin{definition}
A pair of functions $(\eta, q)$ is called an  entropy--entropy flux pair if $\eta :\R\to\R$ is a $C^2$ function and $q :\R\to\R$ is defined by
\begin{equation*}
q(u)=\int_{0}^{u} A\xi\eta'(\xi) d\xi.
\end{equation*}
An entropy-entropy flux pair $(\eta,\, q)$ is called  convex/compactly supported if, in addition, $\eta$ is convex/compactly supported.
\end{definition}
Following \cite{LN}, we prove Theorem \ref{th:main}.
\begin{proof}[Proof of Theorem \ref{th:main}]
Let us consider a compactly supported entropy--entropy flux pair $(\eta, q)$. Multiplying \eqref{eq:RKV-eps-beta} by $\eta'(\ueb)$, we have
\begin{align*}
\pt\eta(\ueb) + \px q(\ueb) =&\eps \eta'(\ueb) \pxx\ueb - \beta \eta'(\ueb) \pxxx\ueb\\
&-\beta\eta'(\ueb)\ptxx\ueb +\beta^2\eta'(\ueb)\ptxxxx\ueb \\
=& I_{1,\,\eps,\,\beta}+I_{2,\,\eps,\,\beta}+ I_{3,\,\eps,\,\beta} + I_{4,\,\eps,\,\beta}+I_{5,\,\eps,\,\beta}\\
&+I_{6,\,\eps,\,\beta}+I_{7,\,\eps,\,\beta}+I_{8,\,\eps,\,\beta},
\end{align*}
where
\begin{equation}
\begin{split}
\label{eq:12000}
I_{1,\,\eps,\,\beta}&=\px(\eps\eta'(\ueb)\px\ueb),\\
I_{2,\,\eps,\,\beta}&= -\eps\eta''(\ueb)(\px\ueb)^2,\\
I_{3,\,\eps,\,\beta}&= \px(-\beta\eta'(\ueb)\pxx\ueb),\\
I_{4,\,\eps,\,\beta}&= \beta\eta''(\ueb)\px\ueb\pxx\ueb,\\
I_{5,\,\eps,\,\beta}&= \px(-\beta \eta'(\ueb)\ptx\ueb),\\
I_{6,\,\eps,\,\beta}&= \beta\eta''(\ueb)\px\ueb\ptx\ueb,\\
I_{7,\,\eps,\,\beta}&=\px(\beta^2\eta'(\ueb)\ptxxx\ueb),\\
I_{8,\,\eps,\,\beta}&=-\beta^2\eta''(\ueb)\px\ueb\ptxxx\ueb.
\end{split}
\end{equation}
Fix $T>0$. Arguing as \cite[Lemma $3.2$]{Cd2}, we have that $I_{1,\,\eps,\,\beta}\to0$ in $H^{-1}((0,T) \times\R)$, and $\{I_{2,\,\eps,\,\beta}\}_{\eps,\beta >0}$ is bounded in $L^1((0,T)\times\R)$.\\
Arguing as \cite[Theorem $1.1$]{Cd}, we get $I_{3,\,\eps,\,\beta}\to 0$ in $H^{-1}((0,T) \times\R)$, and  $I_{4,\,\eps,\,\beta}\to 0$ in $L^1((0,T)\times\R)$.\\
We claim that
\begin{equation*}
I_{5,\,\eps,\,\beta}\to0 \quad \text{in $H^{-1}((0,T) \times\R),\,T>0,$ as $\eps\to 0$.}
\end{equation*}
By \eqref{eq:beta-eps} and Lemma  \ref{lm:ux-l-2},
\begin{align*}
&\norm{\beta  \eta'(\ueb)\ptx\ueb}^2_{L^2((0,T)\times\R)}\\
&\qquad \le\norm{\eta'}^2_{L^{\infty}(\R)}\beta^2\int_{0}^{T}\norm{\ptx\ueb(t,\cdot)}^2_{L^2(\R)}dt\\
&\qquad=\norm{\eta'}^2_{L^{\infty}(\R)}\frac{\beta^2\eps}{\eps}\int_{0}^{T}\norm{\ptx\ueb(t,\cdot)}^2_{L^2(\R)}dt\\
&\qquad\le C_{0}\norm{\eta'}^2_{L^{\infty}(\R)}\frac{\beta}{\eps}\le C_{0}\norm{\eta'}^2_{L^{\infty}(\R)}\eps^3\to0.
\end{align*}
We have that
\begin{equation*}
I_{6,\,\eps,\,\beta}\to0 \quad \text{in $L^1((0,T) \times\R),\,T>0,$ as $\eps\to 0$.}
\end{equation*}
Due to \eqref{eq:beta-eps}, Lemmas \ref{lm:l-2}, \ref{lm:ux-l-2} and the H\"older inequality,
\begin{align*}
&\norm{\beta\eta''(\ueb)\px\ueb\ptx\ueb}_{L^1((0,T)\times\R)}\\
&\qquad\le\norm{\eta''}_{L^{\infty}(\R)}\beta\int_{0}^{T}\!\!\!\int_{\R}\vert\px\ueb\ptx\ueb\vert dtdx\\
&\qquad\le\norm{\eta''}_{L^{\infty}(\R)}\frac{\beta\eps}{\eps}\norm{\px\ueb}_{L^2((0,T)\times\R)}\norm{\pt\ueb}_{L^2((0,T)\times\R)}\\
&\qquad \le C_{0}\norm{\eta''}_{L^{\infty}(\R)}\frac{\beta^{\frac{1}{2}}}{\eps} \le C_{0}\norm{\eta''}_{L^{\infty}(\R)}\eps\to0.
\end{align*}
We claim that
\begin{equation*}
I_{7,\,\eps,\,\beta}\to0 \quad \text{in $H^{-1}((0,T) \times\R),\,T>0,$ as $\eps\to 0$.}
\end{equation*}
By \eqref{eq:beta-eps} and Lemma \ref{lm:ux-l-2},
\begin{align*}
&\norm{ \beta^2\eta'(\ueb)\ptxxx\ueb}^2_{L^2((0,T)\times\R)}\\
&\qquad\le \beta^4 \norm{\eta'}_{L^{\infty}(\R)}\norm{\ptxxx\ueb}^2_{L^2((0,T)\times\R)}\\
&\qquad= \norm{\eta'}_{L^{\infty}(\R)}\frac{\beta^4\eps}{\eps}\norm{\ptxxx\ueb}^2_{L^2((0,T)\times\R)}\\
&\qquad\le C_{0}\norm{\eta'}_{L^{\infty}(\R)}\frac{\beta}{\eps} \le C_{0}\norm{\eta'}_{L^{\infty}(\R)}\eps^3\to0.
\end{align*}
We have that
\begin{equation*}
I_{8,\,\eps,\,\beta}\to0 \quad \text{in $L^1((0,T) \times\R),\,T>0,$ as $\eps\to 0$.}
\end{equation*}
Thanks to \eqref{eq:beta-eps}, Lemmas \ref{lm:l-2}, \ref{lm:ux-l-2} and the H\"older inequality,
\begin{align*}
&\norm{\beta^2\eta''(\ueb)\px\ueb\ptxxx\ueb}_{L^1((0,T)\times\R)}\\
&\qquad\le\beta^2\norm{\eta''}_{L^{\infty}(\R)}\int_{0}^{T}\!\!\!\int_{\R}\vert\px\ueb\ptxxx\ueb\vert dsdx\\
&\qquad\le\norm{\eta''}_{L^{\infty}(\R)}\frac{\beta^2\eps}{\eps}\norm{\px\ueb}_{L^2((0,T)\times\R)}\norm{\ptxxx\ueb}_{L^2((0,T)\times\R)}\\
&\qquad\le C_{0}\norm{\eta''}_{L^{\infty}(\R)}\frac{\beta^{\frac{1}{2}}}{\eps} \le C_{0}\norm{\eta''}_{L^{\infty}(\R)}\eps\to0.
\end{align*}
Therefore, \eqref{eq:con-u} follows from Lemma \ref{lm:1} and the $L^p$ compensated compactness of \cite{SC}.

To have \eqref{eq:entropy1}, we begin by proving that $u$ is a distributional solution of \eqref{eq:BU}.
Let $ \phi\in C^{\infty}(\R^2)$ be a test function with  compact support. We have to prove that
\begin{equation}
\label{eq:k1}
\int_{0}^{\infty}\!\!\!\!\!\int_{\R}\left(u\pt\phi+u^2\px\phi\right)dtdx +\int_{\R}u_{0}(x)\phi(0,x)dx=0.
\end{equation}
We define
\begin{equation}
u_{\eps_{n},\,\beta_{n}}:=u_n.
\end{equation}
We have that
\begin{align*}
\int_{0}^{\infty}\!\!&\!\!\!\int_{\R}\left(u_n\pt\phi+u^2_n\px\phi\right)dtdx +\int_{\R}u_{0,n}(x)\phi(0,x)dx\\
&+\eps_{n}\int_{0}^{\infty}\!\!\!\!\!\int_{\R}u_{n}\pxx\phi dtdx + \eps_n\int_{0}^{\infty}u_{0,n}(x)\pxx\phi(0,x)dx\\
&+ \beta_n\int_{0}^{\infty}\!\!\!\!\int_{\R}u_n\pxxx\phi dt dx + \beta_n\int_{0}^{\infty}u_{0,n}(x)\pxxx\phi(0,x)dx\\
&-\beta_n\int_{0}^{\infty}\!\!\!\!\!\int_{\R}u_n\ptxx\phi dtds - \beta_n\int_{0}^{\infty}u_{0,n}(x)\ptxx\phi(0,x)dx\\
&+\beta^2_{n}\int_{0}^{\infty}\!\!\!\!\!\int_{\R}u_n\ptxxxx\phi dtds -\beta_n\int_{0}^{\infty}u_{0,n}(x)\ptxxxx\phi(0,x)dx=0.
\end{align*}
Therefore, \eqref{eq:k1} follows from \eqref{eq:u0eps} and \eqref{eq:con-u}.

We conclude by proving that $u$ is the unique entropy solution of \eqref{eq:BU}. Fix $T>0$. Let us consider a compactly supported entropy--entropy flux pair $(\eta, q)$, and $\phi\in C^{\infty}_{c}((0,\infty)\times\R)$ a non--negative function. We have to prove that
\begin{equation}
\label{eq:u-entropy-solution}
\int_{0}^{\infty}\!\!\!\!\!\int_{\R}(\pt\eta(u)+ \px q(u))\phi dtdx\le0.
\end{equation}
We have
\begin{align*}
&\int_{0}^{\infty}\!\!\!\!\!\int_{\R}(\pt\eta(u_n)+\px q(u_n))\phi dtdx\\
&\qquad=\eps_{n}\int_{0}^{\infty}\!\!\!\!\!\int_{\R}\px(\eta'(u_n)\px u_n)\phi dtdx-\eps_{n}\int_{0}^{\infty}\!\!\!\!\!\int_{\R} \eta''(u_n)(\px u_n)^2\phi dtdx\\
&\qquad\quad -\beta_{n}\int_{0}^{\infty}\!\!\!\!\!\int_{\R}\px(\eta'(u_n)\pxx u_{n})\phi dtdx\\
&\qquad\quad+\beta_{n}\int_{0}^{\infty}\!\!\!\!\!\int_{\R}\eta''(u_n)\px u_n\pxx u_n\phi dtdx -\beta_{n}\int_{0}^{\infty}\!\!\!\!\!\int_{\R}\px(\eta'(u_n)\ptx u_n)\phi dtdx\\
&\qquad\quad +\beta_{n}\int_{0}^{\infty}\!\!\!\!\!\int_{\R}\eta''(u_n)\px u_n \ptx u_n\phi dtdx+\beta^2_{n}\int_{0}^{\infty}\!\!\!\!\!\int_{\R}\px(\eta'(u_n)\ptxxx u_n)\phi dtdx\\
&\qquad\quad -\beta^2_{n}\int_{0}^{\infty}\!\!\!\!\!\int_{\R}\eta''(u_n)\px u_n\ptxxx u_n\phi dtdx\\
&\qquad\le - \eps_{n}\int_{0}^{\infty}\!\!\!\!\!\int_{\R}\eta'(u_n)\px u_n\px\phi dtdx +\beta_{n}\int_{0}^{\infty}\!\!\!\!\!\int_{\R}\eta'(u_n)\pxx u_n\px\phi dtdx\\
&\qquad\quad+\beta_{n}\int_{0}^{\infty}\!\!\!\!\!\int_{\R}\eta''(u_n)\px u_n\pxx u_n\phi dtdx +\beta_{n}\int_{0}^{\infty}\!\!\!\!\!\int_{\R}\eta'(u_n)\ptx u_n\px\phi dtdx\\
&\qquad\quad +\beta_{n}\int_{0}^{\infty}\!\!\!\!\!\int_{\R}\eta''(u_n)\px u_n \ptx u_n\phi dtdx -\beta^2_{n} \int_{0}^{\infty}\!\!\!\!\!\int_{\R}\eta'(u_n)\ptxxx u_n\px\phi dtdx\\
&\qquad\quad  -\beta^2_{n}\int_{0}^{\infty}\!\!\!\!\!\int_{\R}\eta''(u_n)\px u_n\ptxxx u_n\phi dtdx\\
&\qquad \le \eps_{n}\int_{0}^{\infty}\!\!\!\!\!\int_{\R}\vert\eta'(u_n)\vert\vert\px u_n\vert\vert\px\phi\vert dtdx +\beta_{n}\int_{0}^{\infty}\!\!\!\!\!\int_{\R}\vert\eta'(u_n)\vert\vert\pxx u_n\vert\vert\px\phi\vert dtdx\\
&\qquad\quad +\beta_{n}\int_{0}^{\infty}\!\!\!\!\!\int_{\R}\vert\eta''(u_n)\vert\vert\px u_n\vert\vert\pxx u_n\vert\vert\phi\vert dtdx +\beta_{n}\int_{0}^{\infty}\!\!\!\!\!\int_{\R}\vert\eta'(u_n)\vert\vert\vert\ptx u_n\vert\vert\px\phi\vert dtdx\\
&\qquad\quad +\beta_{n}\int_{0}^{\infty}\!\!\!\!\!\int_{\R}\vert\eta''(u_n)\vert\vert\px u_n\vert\vert \ptx u_n\vert\vert\phi\vert dtdx +\beta^2_{n} \int_{0}^{\infty}\!\!\!\!\!\int_{\R}\vert\eta'(u_n)\vert\vert\ptxxx u_n\vert\vert\px\phi\vert dtdx\\
&\qquad\quad  +\beta^2_{n}\int_{0}^{\infty}\!\!\!\!\!\int_{\R}\vert\eta''(u_n)\vert\vert\px u_n\vert\vert\ptxxx u_n\vert\phi dtdx\\
&\qquad\le \eps_n\norm{\eta'}_{L^{\infty}(\R)}\norm{\px u_n}_{L^2(\supp(\px\phi))}\norm{\px \phi}_{L^2(\supp(\px\phi))}\\
&\qquad\quad+\beta_n\norm{\eta'}_{L^{\infty}(\R)}\norm{\pxx u_n}_{L^2(\supp(\px\phi))}\norm{\px \phi}_{L^2(\supp(\px\phi))}\\
&\qquad\quad+\beta_{n}\norm{\eta''}_{L^{\infty}(\R)}\norm{\phi}_{L^{\infty}(\R^{+}\times\R)}\norm{\px u_n\pxx u_n}_{L^1(\supp(\phi))}\\
&\qquad\quad+ \beta_{n}\norm{\eta'}_{L^{\infty}(\R)}\norm{\ptx u_n}_{L^2(\supp(\px\phi))}\norm{\px \phi}_{L^2(\supp(\px\phi))}\\
&\qquad\quad +\beta_{n}\norm{\eta''}_{L^{\infty}(\R)}\norm{\phi}_{L^{\infty}(\R^{+}\times\R}\norm{\px u_n\ptx u_n}_{L^1(\supp(\phi))}\\
&\qquad\quad+\beta^2_n\norm{\eta'}_{L^{\infty}(\R)}\norm{\ptxxx u_n}_{L^2(\supp(\px\phi))}\norm{\px \phi}_{L^2(\supp(\px\phi))}\\
&\qquad\quad  +\beta^2_{n}\norm{\eta'}_{L^{\infty}(\R)}\norm{\phi}_{L^{\infty}(\R^{+}\times\R)}\norm{\px u_n\ptxxx u_n}_{L^1(\supp(\phi))}\\
&\qquad \le \eps_n\norm{\eta'}_{L^{\infty}(\R)}\norm{\px u_n}_{L^2((0,T)\times\R)}\norm{\px \phi}_{L^2((0,T)\times\R)}\\
&\qquad\quad + \beta_n\norm{\eta'}_{L^{\infty}(\R)}\norm{\pxx u_n}_{L^2((0,T)\times\R)}\norm{\px \phi}_{L^2((0,T)\times\R)}\\
&\qquad\quad+\beta_{n}\norm{\eta''}_{L^{\infty}(\R)}\norm{\phi}_{L^{\infty}(\R^{+}\times\R)}\norm{\px u_n\pxx u_n}_{L^1((0,T)\times\R)}\\
&\qquad\quad+ \beta_{n}\norm{\eta'}_{L^{\infty}(\R)}\norm{\ptx u_n}_{L^2((0,T)\times\R)}\norm{\px \phi}_{L^2((0,T)\times\R)}\\
&\qquad\quad+ \beta_{n}\norm{\eta''}_{L^{\infty}(\R)}\norm{\phi}_{L^{\infty}(\R^{+}\times\R)}\norm{\px u_n\ptx u_n}_{L^1((0,T)\times\R)}\\
&\qquad\quad+\beta^2_n\norm{\eta'}_{L^{\infty}(\R)}\norm{\ptxxx u_n}_{L^2((0,T)\times\R)}\norm{\px \phi}_{L^2((0,T)\times\R)}\\
&\qquad\quad  +\beta^2_{n}\norm{\eta'}_{L^{\infty}(\R)}\norm{\phi}_{L^{\infty}(\R^{+}\times\R)}\norm{\px u_n\ptxxx u_n}_{L^1((0,T)\times\R)}.
\end{align*}
\eqref{eq:u-entropy-solution} follows from \eqref{eq:beta-eps}, \eqref{eq:con-u}, Lemmas \ref{lm:l-2} and \ref{lm:ux-l-2}.
\end{proof}

\section{The Rosenau-RLW equation}
\label{sec:Ro1}

In this section, we consider \eqref{eq:RKV34} and augment \eqref{eq:RKV34} with the initial condition
\begin{equation}
u(0,x)=u_{0}(x),
\end{equation}
on which we assume that
\begin{equation}
\label{eq:u-0-2}
u_0\in L^2(\R)\cap L^4(\R).
\end{equation}
We study the dispersion-diffusion limit for  \eqref{eq:RKV34}. Therefore, we consider the following fifth order problem
\begin{equation}
\label{eq:R-eps-beta}
\begin{cases}
\pt\ueb+ \px \ueb^2 - \beta\ptxx \ueb +\beta^2\ptxxxx\ueb=\eps\pxx\ueb, &\qquad t>0, \ x\in\R ,\\
\ueb(0,x)=u_{\eps,\beta,0}(x), &\qquad x\in\R,
\end{cases}
\end{equation}
where $u_{\eps,\beta,0}$ is a $C^\infty$ approximation of $u_{0}$ such that
\begin{equation}
\begin{split}
\label{eq:u0eps-1}
&u_{\eps,\,\beta,\,0} \to u_{0} \quad  \textrm{in $L^{p}_{loc}(\R)$, $1\le p < 4$, as $\eps,\,\beta \to 0$,}\\
&\norm{u_{\eps,\beta, 0}}^2_{L^2(\R)}+\norm{u_{\eps,\beta, 0}}^4_{L^4(\R)}+\beta\eps^2\norm{\pxx u_{\eps,\beta,0}}^2_{L^2(\R)}\le C_0,\quad \eps,\beta >0,
\end{split}
\end{equation}
and $C_0$ is a constant independent on $\eps$ and $\beta$.

The main result of this section is the following theorem.
\begin{theorem}
\label{th:main-1}
Assume that \eqref{eq:u-0-2} and  \eqref{eq:u0eps} hold.
If
\begin{equation}
\label{eq:beta-eps-2}
\beta=\mathbf{\mathcal{O}}(\eps^{4}),
\end{equation}
then, there exist two sequences $\{\eps_{n}\}_{n\in\N}$, $\{\beta_{n}\}_{n\in\N}$, with $\eps_n, \beta_n \to 0$, and a limit function
\begin{equation*}
u\in L^{\infty}(\R^{+}; L^2(\R)\cap L^{4}(\R)),
\end{equation*}
such that
\begin{align}
\label{eq:con-u-1}
&u_{\eps_n, \beta_n}\to u \quad \text{strongly in $L^{p}_{loc}(\R^{+}\times\R)$, for each $1\le p <4$};\\
\label{eq:entropy11}
&u \quad\text{is the unique entropy solution of \eqref{eq:BU}}.
\end{align}
\end{theorem}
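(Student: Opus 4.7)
The plan is to mirror the proof of Theorem \ref{th:main}, making only those modifications that reflect the absence of the pure dispersion term $\beta\pxxx u$ in \eqref{eq:R-eps-beta}. In particular, I would first establish analogues of Lemmas \ref{lm:l-2} and \ref{lm:ux-l-2} for the solutions $\ueb$ of \eqref{eq:R-eps-beta}, and then run the same compensated-compactness machinery (Murat's Lemma \ref{lm:1} together with the $L^p$ result of \cite{SC}) to extract a strongly convergent subsequence whose limit is the unique \Kruzkov entropy solution of \eqref{eq:BU}.

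For the $L^2$-type estimate, I would multiply \eqref{eq:R-eps-beta} by $\ueb$ and integrate in $x$. Since $2\int_\R \ueb^2 \px\ueb\,dx=0$, and the terms $-\beta\int_\R \ueb\ptxx\ueb\,dx$ and $\beta^2\int_\R \ueb\ptxxxx\ueb\,dx$ reduce to time derivatives of $\norm{\px\ueb}_{L^2}^2$ and $\norm{\pxx\ueb}_{L^2}^2$ respectively just as in Lemma \ref{lm:l-2}, the energy identity now reads
\begin{equation*}
\frac{d}{dt}\left(\norm{\ueb}_{L^2(\R)}^2+\beta\norm{\px\ueb}_{L^2(\R)}^2+\beta^2\norm{\pxx\ueb}_{L^2(\R)}^2\right)+2\eps\norm{\px\ueb}_{L^2(\R)}^2=0,
\end{equation*}
which upon integration and use of \eqref{eq:u0eps-1} yields the exact analogue of \eqref{eq:l-2}. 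The $L^\infty$ bounds \eqref{eq:u-l-infty}--\eqref{eq:ux-l-infty} then follow verbatim by the fundamental-theorem-of-calculus plus H\"older argument.

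For the higher-order estimate, I would use the same multiplier $\ueb^3 - A\beta\eps\ptxx\ueb - B\eps^2\pxx\ueb + C\beta^2\pxxxx\ueb$ as in Lemma \ref{lm:ux-l-2}. The computation is actually \emph{simpler} here: the entire contribution from the $\beta\pxxx\ueb$ term (namely the dispersion lines in \eqref{eq:p111} and a piece of \eqref{eq:p-100}, plus the $\tfrac{5C\beta^2}{2}\int(\px\ueb)^2\pxxx\ueb\,dx$ commutator in \eqref{eq:p101}--\eqref{eq:p102}) simply disappears. The surviving right-hand side consists of the Young-type terms already handled in \eqref{eq:p256}--\eqref{eq:290}, so the constant-selection conditions \eqref{eq:p302}--\eqref{eq:sce-D} can only become easier to satisfy. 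Choosing $A,B,C$ as in \eqref{eq:con-A-B-C} under the scaling \eqref{eq:beta-eps-2}, I obtain the same family of bounds listed at the end of Lemma \ref{lm:ux-l-2}, in particular uniform $L^\infty(\R^+;L^4(\R))$ control of $\ueb$ and the mixed space-time $L^2$ bounds on $\sqrt{\beta\eps}\,\ptx\ueb$, $\beta\sqrt\eps\,\ptxx\ueb$, $\beta\sqrt{\beta\eps}\,\ptxxx\ueb$ and $\eps\sqrt\eps\,\pxx\ueb$.

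With these a priori estimates in place, Theorem \ref{th:main-1} follows the template of the proof of Theorem \ref{th:main}: write the entropy balance $\pt\eta(\ueb)+\px q(\ueb)=\sum_k I_{k,\eps,\beta}$, where only the pieces coming from $\eps\pxx$, $-\beta\ptxx$ and $\beta^2\ptxxxx$ appear (the two error terms $I_{3,\eps,\beta},I_{4,\eps,\beta}$ produced by $\beta\pxxx\ueb$ are now absent). Each remaining $I_{k,\eps,\beta}$ splits into a total $x$-derivative of something small in $L^2$ (giving $H^{-1}_{loc}$-compactness) plus a remainder bounded in $L^1_{loc}$, by exactly the computations performed after \eqref{eq:12000}, and the scaling $\beta=\mathcal{O}(\eps^4)$ again forces these bounds to go to zero. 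Lemma \ref{lm:1} then ensures $\pt\eta(\ueb)+\px q(\ueb)$ is compact in $H^{-1}_{loc}$; the $L^p$ compensated compactness framework of \cite{SC} delivers the strong convergence \eqref{eq:con-u-1}, and passing to the limit in the weak formulation and in the entropy inequality (as in the final step of the proof of Theorem \ref{th:main}) identifies the limit as the unique entropy solution of \eqref{eq:BU}, giving \eqref{eq:entropy11}. The main obstacle I anticipate is purely bookkeeping: verifying that, despite the loss of the $\beta\pxxx\ueb$ term, the multiplier estimate still closes under the scaling \eqref{eq:beta-eps-2} with a consistent choice of $A,B,C$; but since that term only contributed sign-indefinite commutators which had to be absorbed anyway, its removal is actually beneficial rather than obstructive.
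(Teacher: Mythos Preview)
Your overall strategy is correct and matches the paper's: establish the $L^2$ energy identity (identical to Lemma \ref{lm:l-2}, as you note and as the paper explicitly observes), prove a higher-order multiplier estimate, and run the same compensated-compactness argument with the two $\beta\pxxx\ueb$-induced error terms removed from the entropy balance. The paper's proof is exactly this.

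There is, however, a genuine difference in the multiplier step, and a small but real error in your bookkeeping. The paper does \emph{not} reuse the full multiplier $\ueb^3 - A\beta\eps\ptxx\ueb - B\eps^2\pxx\ueb + C\beta^2\pxxxx\ueb$; it uses the shorter multiplier $\ueb^3 - A\beta\eps\ptxx\ueb$ (see Lemma \ref{lm:ux-l-2-1}). This is enough because the only estimates actually needed for the entropy-production terms $I_{1},I_{2},I_{5},I_{6},I_{7},I_{8}$ are the $L^4$ bound on $\ueb$ and the space--time $L^2$ bounds on $\sqrt{\beta\eps}\,\ptx\ueb$, $\beta\sqrt{\eps}\,\ptxx\ueb$, $\beta\sqrt{\beta\eps}\,\ptxxx\ueb$, all of which come from the $A\beta\eps\ptxx\ueb$ piece alone. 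The $B$ and $C$ pieces of the multiplier were introduced in Section \ref{sec:vv} precisely to absorb commutators generated by the \emph{pure dispersion} $\beta\pxxx\ueb$; once that term is gone they are no longer required, and dropping them makes the constant-selection system collapse to the single quadratic inequality $2A^2-3A+2C_0D<0$.

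Your claim that the commutator $\tfrac{5C\beta^2}{2}\int_\R(\px\ueb)^2\pxxx\ueb\,dx$ ``simply disappears'' is incorrect: that term arises in \eqref{eq:p-100}--\eqref{eq:p102} from the nonlinearity $2\ueb\px\ueb$ against the multiplier piece $C\beta^2\pxxxx\ueb$, not from $\beta\pxxx\ueb$. If you insist on keeping the full multiplier, that commutator is still present and you must still control it via \eqref{eq:p255}. The absorption term $C\beta^2\eps\norm{\pxxx\ueb}_{L^2}^2$ (from \eqref{eq:p114}) is still available, so the estimate does close and your approach is valid---but it is not as simple as you suggest, and the paper's choice of the shorter multiplier sidesteps this entirely.
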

Let us prove some a priori estimates on $\ueb$, denoting with $C_0$ the constants which
depend on the initial data.

We begin by observing that Lemma \ref{lm:l-2} holds also for \eqref{eq:R-eps-beta}.

Following \cite[Lemma $2.2$]{Cd}, or \cite[Lemma $2.2$]{CdREM}, or \cite[Lemma $4.2$]{CK}, we prove the following result.
\begin{lemma}\label{lm:ux-l-2-1}
Assume \eqref{eq:beta-eps-2}. For each $t>0$,
\begin{itemize}
\item[$i)$] the family $\{\ueb\}_{\eps,\beta}$ is bounded in $L^{\infty}(\R^{+};L^{4}(\R))$;
\item[$ii)$]the family $\{\eps\sqrt{\beta}\pxx\ueb\}_{\eps,\beta}$ is bounded in $L^{\infty}(\R^{+};L^2(\R))$;
\item[$iii)$] the families $\{\sqrt{\beta\eps}\ptx\ueb\}_{\eps,\beta},\,\{\beta\sqrt{\eps}\ptxx\ueb\}_{\eps,\beta},\,\{\beta\sqrt{\beta\eps}\ptxxx\ueb\}_{\eps,\beta},$\\
    $\{\sqrt{\eps}\ueb\px\ueb\}_{\eps,\beta}$ are bounded in\\ $L^2(\R^{+}\times\R)$.
\end{itemize}
\end{lemma}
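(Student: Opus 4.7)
The strategy parallels the proof of Lemma~\ref{lm:ux-l-2}, but is lighter because the Rosenau--RLW equation \eqref{eq:R-eps-beta} lacks the dispersive $\beta\pxxx\ueb$ contribution. The plan is to multiply \eqref{eq:R-eps-beta} by the combination
$\ueb^{3}-A\beta\eps\ptxx\ueb$,
where $A>0$ is a constant to be fixed at the end. Integrating over $\R$ and performing the standard integrations by parts produces, on the left-hand side, the energy-like expressions
$\tfrac{1}{4}\tfrac{d}{dt}\norm{\ueb(t,\cdot)}^{4}_{L^{4}(\R)}$ and $\tfrac{A\beta\eps^{2}}{2}\tfrac{d}{dt}\norm{\pxx\ueb(t,\cdot)}^{2}_{L^{2}(\R)}$,
together with the dissipative contributions
$A\beta\eps\norm{\ptx\ueb}^{2}_{L^{2}(\R)}+A\beta^{2}\eps\norm{\ptxx\ueb}^{2}_{L^{2}(\R)}+A\beta^{3}\eps\norm{\ptxxx\ueb}^{2}_{L^{2}(\R)}+3\eps\norm{\ueb\px\ueb}^{2}_{L^{2}(\R)}$,
and on the right-hand side three remainder integrals coming from the interaction of $\px\ueb^{2}$ with the multiplier, namely $2A\beta\eps\int_{\R}\ueb\px\ueb\ptxx\ueb\,dx$, $-3\beta\int_{\R}\ueb^{2}\px\ueb\ptx\ueb\,dx$, and $3\beta^{2}\int_{\R}\ueb^{2}\px\ueb\ptxxx\ueb\,dx$. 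Note that no $\tfrac{d}{dt}\norm{\px\ueb}^{2}_{L^{2}(\R)}$ term arises, which is consistent with the statement no longer containing an $\eps\px\ueb$ family.

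Each of the three remainders will then be absorbed via Young's inequality. The first one splits as $A\eps\bigl|2\ueb\px\ueb\bigr|\cdot\bigl|\beta\ptxx\ueb\bigr|$ and yields $2A\eps\norm{\ueb\px\ueb}^{2}_{L^{2}(\R)}+\tfrac{A\beta^{2}\eps}{2}\norm{\ptxx\ueb}^{2}_{L^{2}(\R)}$. For the second and third, the key ingredient is the $L^{\infty}$ bound $\norm{\ueb(t,\cdot)}^{2}_{L^{\infty}(\R)}\le C_{0}\beta^{-1/2}$ from Lemma~\ref{lm:l-2}, combined with \eqref{eq:beta-eps-2} rewritten as $\beta\le D^{2}\eps^{4}$ for a small $D>0$, which converts the dangerous prefactor $\beta^{1/2}/\eps$ into a constant $D$. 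Each of those remainders is then dominated by $\tfrac{C_{0}D}{A}\eps\norm{\ueb\px\ueb}^{2}_{L^{2}(\R)}$ plus a controllable fraction of the dissipative quantities $\beta\eps A\norm{\ptx\ueb}^{2}_{L^{2}(\R)}$ and $\beta^{3}\eps A\norm{\ptxxx\ueb}^{2}_{L^{2}(\R)}$.

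Choosing $A$ and $D$ small enough that the resulting coefficient of $\eps\norm{\ueb\px\ueb}^{2}_{L^{2}(\R)}$ on the left-hand side remains strictly positive (a condition of the shape $2A^{2}-3A+2C_{0}D<0$, which admits solutions as soon as $D$ is small), one integrates the resulting differential inequality from $0$ to $t$, invokes the initial bound \eqref{eq:u0eps-1}, and reads off the six claimed estimates: the $L^{\infty}_{t}L^{4}_{x}$ bound on $\ueb$, the $L^{\infty}_{t}L^{2}_{x}$ bound on $\eps\sqrt{\beta}\pxx\ueb$, and the $L^{2}_{t,x}$ bounds on $\sqrt{\beta\eps}\ptx\ueb$, $\beta\sqrt{\eps}\ptxx\ueb$, $\beta\sqrt{\beta\eps}\ptxxx\ueb$ and $\sqrt{\eps}\ueb\px\ueb$.

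The main obstacle, exactly as in Lemma~\ref{lm:ux-l-2}, is the nonlinear coupling between the Burgers term $\px\ueb^{2}$ and the high-derivative multiplier $\beta\eps\ptxx\ueb$: the closure of the estimate depends on the delicate balance between the blow-up rate $\beta^{-1/4}$ for $\norm{\ueb}_{L^{\infty}(\R)}$ and the precise powers of $\eps$ and $\beta$ attached to the dissipative quantities. The hypothesis \eqref{eq:beta-eps-2} is tuned exactly to make those scales compatible, and the order in which the constants $A$ and $D$ are chosen is crucial. Everything else in the argument is a sequence of integrations by parts and applications of Young's inequality.
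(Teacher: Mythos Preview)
Your proposal is correct and follows essentially the same route as the paper's proof: the same multiplier $\ueb^{3}-A\beta\eps\ptxx\ueb$, the same three remainder integrals, the same Young-inequality absorptions using $\norm{\ueb}_{L^{\infty}}^{2}\le C_{0}\beta^{-1/2}$ and $\beta\le D^{2}\eps^{4}$, and the identical algebraic condition $2A^{2}-3A+2C_{0}D<0$ governing the choice of $A$ and $D$.
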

\begin{proof}
Let $A$ be a positive constant which will be specified later. Multiplying  \eqref{eq:R-eps-beta} by $\ueb^3 -A\beta\eps\ptxx \ueb$, we have
\begin{equation}
\label{eq:l123}
\begin{split}
&\left(\ueb^3 -A\beta\eps\ptxx \ueb \right)\pt\ueb+2\left(\ueb^3 -A\beta\eps\ptxx \ueb \right)\ueb\px\ueb\\
&\qquad\quad-\beta\left(\ueb^3 -A\beta\eps\ptxx \ueb \right)\ptxx\ueb+\beta^2\left(\ueb^3 -A\beta\eps\ptxx \ueb \right)\ptxxxx\ueb\\
&\qquad=\eps\left(\ueb^3 -A\beta\eps\ptxx \ueb \right)\pxx\ueb.
\end{split}
\end{equation}
Since
\begin{align*}
\int_{\R}\left(\ueb^3 -A\beta\eps\ptxx \ueb \right)\pt\ueb dx=& \frac{1}{4}\frac{d}{dt}\norm{\ueb(t,\cdot)}^4_{L^4(\R)} + A\beta\eps\norm{\ptx\ueb(t,\cdot)}^2_{L^2(\R)},\\
2\int_{\R}\left(\ueb^3 -A\beta\eps\ptxx \ueb \right)\ueb\px\ueb dx =& -2A\beta\eps\int_{\R}\ueb\px\ueb\ptxx\ueb dx,\\
-\beta\int_{\R}\left(\ueb^3 -A\beta\eps\ptxx \ueb \right)\ptxx\ueb dx=& 3\beta\int_{\R}\ueb^2\px\ueb\ptx\ueb dx +A\beta^2\eps\norm{\ptxx\ueb(t,\cdot)}^2_{L^2(\R)}\\
\beta^2\int_{\R}\left(\ueb^3 -A\beta\eps\ptxx \ueb \right)\ptxxxx\ueb dx =& -3\beta^2\int_{\R}\ueb^2\px\ueb\ptxxx\ueb dx\\
&+ A\beta^3\eps\norm{\ptxxx\ueb(t,\cdot)}^2_{L^(\R)},\\
\eps\int_{\R}\left(\ueb^3 -A\beta\eps\ptxx \ueb \right)\pxx\ueb dx= & -3\eps\norm{\ueb(t,\cdot)\px\ueb(t,\cdot)}^2_{L^2(\R)}\\ &-\frac{A\beta\eps^2}{2}\frac{d}{dt}\norm{\pxx\ueb(t,\cdot)}^2_{L^2(\R)},
\end{align*}
integrating \eqref{eq:l123} on $\R$, we get
\begin{equation}
\label{eq:l589}
\begin{split}
&\frac{d}{dt}\left(\frac{1}{4}\norm{\ueb(t,\cdot)}^4_{L^4(\R)}+ \frac{A\beta\eps^2}{2}\norm{\pxx\ueb(t,\cdot)}^2_{L^2(\R)}\right)\\
&\qquad\quad +A\beta\eps\norm{\ptx\ueb(t,\cdot)}^2_{L^2(\R)}+A\beta^2\eps\norm{\ptxx\ueb(t,\cdot)}^2_{L^2(\R)}\\
&\qquad\quad + A\beta^3\eps\norm{\ptxxx\ueb(t,\cdot)}^2_{L^2(\R)} +3\eps\norm{\ueb(t,\cdot)\px\ueb(t,\cdot)}^2_{L^2(\R)}\\
&\qquad= 2A\beta\eps\int_{\R}\ueb\px\ueb\ptxx\ueb dx -3\beta\int_{\R}\ueb^2\px\ueb\ptx\ueb dx \\
&\qquad\quad  +3\beta^2\int_{\R}\ueb^2\px\ueb\ptxxx\ueb dx.
\end{split}
\end{equation}
It follows from \eqref{eq:You23}, \eqref{eq:beta-eps-1}, \eqref{eq:p289}, \eqref{eq:290} and \eqref{eq:l589} that
\begin{align*}
&\frac{d}{dt}\left(\frac{1}{4}\norm{\ueb(t,\cdot)}^4_{L^4(\R)}+ \frac{A\beta\eps^2}{2}\norm{\pxx\ueb(t,\cdot)}^2_{L^2(\R)}\right)\\
&\qquad\quad +\frac{A\beta\eps}{2}\norm{\ptx\ueb(t,\cdot)}^2_{L^2(\R)}+\frac{A\beta^2\eps}{2}\norm{\ptxx\ueb(t,\cdot)}^2_{L^2(\R)}\\
&\qquad\quad +\eps\left(3-\frac{2C_{0}D}{A}-2A\right)\norm{\ueb(t,\cdot)\px\ueb(t,\cdot)}^2_{L^2(\R)}\\
&\qquad\quad + \frac{A\beta^3\eps}{2}\norm{\ptxxx\ueb(t,\cdot)}^2_{L^2(\R)}\le 0.
\end{align*}
where $D$ is a positive constant which will be specified later.\\
We search a constant $A$ such that
\begin{equation*}
3-\frac{2C_{0}D}{A}-2A>0,
\end{equation*}
that is
\begin{equation}
\label{eq:l-A}
2A^2 -3A +2C_0 D <0.
\end{equation}
$A$ does exist if and only if
\begin{equation}
\label{eq:delta}
9-16C_{0}D>0.
\end{equation}
Choosing
\begin{equation}
\label{eq:d-3}
D= \frac{1}{16C_0},
\end{equation}
it follows from \eqref{eq:l-A} and \eqref{eq:delta} that there exist $0<A_1<A_2$, such that for every
\begin{equation}
\label{eq:A-8}
A_1<A<A_2,
\end{equation}
\eqref{eq:l-A} holds. Hence, we get
\begin{equation}
\label{eq:r13}
\begin{split}
&\frac{d}{dt}\left(\frac{1}{4}\norm{\ueb(t,\cdot)}^4_{L^4(\R)}+ \frac{A\beta\eps^2}{2}\norm{\pxx\ueb(t,\cdot)}^2_{L^2(\R)}\right)\\
&\qquad\quad +\frac{A\beta\eps}{2}\norm{\ptx\ueb(t,\cdot)}^2_{L^2(\R)}+\frac{A\beta^2\eps}{2}\norm{\ptxx\ueb(t,\cdot)}^2_{L^2(\R)}\\
&\qquad\quad +\eps K_1\norm{\ueb(t,\cdot)\px\ueb(t,\cdot)}^2_{L^2(\R)}+ \frac{A\beta^3\eps}{2}\norm{\ptxxx\ueb(t,\cdot)}^2_{L^2(\R)}\le 0.
\end{split}
\end{equation}
where $K_1$ is a fixed  positive constant. Integrating \eqref{eq:r13} on $(0,t)$, from \eqref{eq:u0eps-1}, we have
\begin{align*}
&\frac{1}{4}\norm{\ueb(t,\cdot)}^4_{L^4(\R)}+ \frac{A_3\beta\eps^2}{2}\norm{\pxx\ueb(t,\cdot)}^2_{L^2(\R)}\\
&\qquad\quad +\frac{A_3\beta\eps}{2}\int_{0}^{t}\norm{\ptx\ueb(s,\cdot)}^2_{L^2(\R)}ds+\frac{A_3\beta^2\eps}{2}\int_{0}^{t}\norm{\ptxx\ueb(s,\cdot)}^2_{L^2(\R)}ds\\
&\qquad\quad +\eps K_1\int_{0}^{t}\norm{\ueb(s,\cdot)\px\ueb(s,\cdot)}^2_{L^2(\R)}ds+ \frac{A_3\beta^3\eps}{2}\int_{0}^{t}\norm{\ptxxx\ueb(s,\cdot)}^2_{L^2(\R)}ds\le C_{0},
\end{align*}
Hence,
\begin{align*}
\norm{\ueb(t,\cdot)}_{L^4(\R)}\le &C_0,\\
\sqrt{\beta}\eps\norm{\pxx\ueb(t,\cdot)}_{L^2(\R)}\le &C_0,\\
\beta\eps\int_{0}^{t}\norm{\ptx\ueb(s,\cdot)}^2_{L^2(\R)}ds\le &C_0,\\
\beta^2\eps \int_{0}^{t}\norm{\ptxx\ueb(s,\cdot)}^2_{L^2(\R)}ds \le &C_0,\\
\eps\int_{0}^{t}\norm{\ueb(s,\cdot)\px\ueb(s,\cdot)}^2_{L^2(\R)}ds \le&C_0,\\
\beta^3\eps\int_{0}^{t}\norm{\ptxxx\ueb(s,\cdot)}^2_{L^2(\R)}ds\le& C_{0},
\end{align*}
for every $t>0$.
\end{proof}
Now, we are ready for the proof of Theorem \ref{th:main-1}.
\begin{proof}[Proof of Theorem \ref{th:main-1}]
Let us consider a compactly supported entropy--entropy flux pair $(\eta, q)$. Multiplying \eqref{eq:R-eps-beta} by $\eta'(\ueb)$, we have
\begin{align*}
\pt\eta(\ueb) + \px q(\ueb) =&\eps \eta'(\ueb) \pxx\ueb -\beta\eta'(\ueb)\ptxx\ueb +\beta^2\eta'(\ueb)\ptxxxx\ueb \\
=& I_{1,\,\eps,\,\beta}+I_{2,\,\eps,\,\beta}+ I_{3,\,\eps,\,\beta} + I_{4,\,\eps,\,\beta}+I_{5,\,\eps,\,\beta}+I_{6,\,\eps,\,\beta},
\end{align*}
where
\begin{equation}
\begin{split}
\label{eq:120004}
I_{1,\,\eps,\,\beta}&=\px(\eps\eta'(\ueb)\px\ueb),\\
I_{2,\,\eps,\,\beta}&= -\eps\eta''(\ueb)(\px\ueb)^2,\\
I_{3,\,\eps,\,\beta}&= \px(-\beta \eta'(\ueb)\ptx\ueb),\\
I_{4,\,\eps,\,\beta}&= \beta\eta''(\ueb)\px\ueb\ptx\ueb,\\
I_{5,\,\eps,\,\beta}&=\px(\beta^2\eta'(\ueb)\ptxxx\ueb),\\
I_{6,\,\eps,\,\beta}&=-\beta^2\eta''(\ueb)\px\ueb\ptxxx\ueb.
\end{split}
\end{equation}
Following Theorem \ref{th:main}, we have that $I_{1,\,\eps,\,\beta},\, I_{3,\,\eps,\,\beta},\, I_{5,\,\eps,\,\beta}\to 0$ in $H^{-1}((0,T)\times\R)$, $\{I_{2,\,\eps,\,\beta}\}_{\eps,\beta>0}$ is bounded in $L^1((0,T)\times\R)$,   
$I_{4,\,\eps,\,\beta},\,I_{6,\,\eps,\,\beta}\to 0$ in $L^1((0,T)\times\R)$. 

Arguing as Theorem \ref{th:main}, we get \eqref{eq:con-u-1} and \eqref{eq:entropy11}.
\end{proof}


\begin{thebibliography}{40}

\bibitem{AB}
{\sc M. Antonova and A. Biswas.}
\newblock Adiabatic parameter dynamics of perturbed solitary waves.
\newblock{\em Communications in Nonlinear Science and Numerical Simulation}, 14:734--748, 2009.

\bibitem{BTL}
{\sc A. Biswas, H. Triki and M. Labidi.}
\newblock Bright and dark solitons of the Rosenau-Kawahara equation with power law nonlinearity.
\newblock{\em Physics of Wave Phenomena}, 19:24--29, 2011.

\bibitem{Cd}
{\sc G. M. Coclite and L. di Ruvo.}
\newblock A singular limit problem for the Kudryashov-Sinelshchikov equation.
\newblock Submitted.

%\bibitem{Cd1}
%{\sc G.~M. Coclite and L. di Ruvo.}
%\newblock Convergence of the regularized short pulse equation to the short pulse one.
%\newblock Submitted.

\bibitem{Cd2}
{\sc G.~M. Coclite and L. di Ruvo.}
\newblock Convergence of the Ostrovsky Equation to the Ostrovsky-Hunter One.
\newblock {\em J. Differential Equations}, 256:3245--3277, 2014.


\bibitem{CdREM}
{\sc G. M. Coclite, L. di Ruvo, J. Ernest, and S. Mishra.}
\newblock Convergence of vanishing capillarity approximations for scalar conservation laws with discontinuous fluxes.
\newblock {\em Netw. Heterog. Media}, 8(4):969--984, 2013.

\bibitem{CK}
{\sc G. ~M. Coclite and  K.~H. Karlsen.}
\newblock A singular limit problem for conservation laws related to the Camassa-Holm shallow water equation.
\newblock {\em Comm. Partial Differential Equations}, 31:1253--1272, 2006.

\bibitem{E}
{\sc A. Esfahani.}
Solitary wave solutions for generalized Rosenau-KdV equation.
\newblock {\em Communications in Theoretical Physics}, 55(3):396--398, 2011.




\bibitem{EMTYB}
{\sc G. Ebadi, A. Mojaver, H. Triki, A. Yildirim, and A. Biswas.}
\newblock Topological solitons and other solutions of the Rosenau-KdV equation with power law nonlinearity.
\newblock {\em Romanian J. of Physics}, 58:3--14, 2013.



\bibitem{HXH}
{\sc J. Hu, Y. Xu, and B. Hu}
\newblock Conservative Linear Difference Scheme for Rosenau-KdV Equation.
\newblock {\em Adv. Math. Phys.}, 423718, 2013.

\bibitem{LB}
{\sc M. Labidi and A. Biswas.}
\newblock Application of He’s principles to Rosenau-Kawahara equation.
\newblock{\em Mathematics in Engineering, Science and Aerospace}, 2:183--197, 2011.

\bibitem{LN}
{\sc P. G. LeFloch and R. Natalini.}
\newblock Conservation laws with vanishing nonlinear diffusion and dispersion.
\newblock {\em  Nonlinear Anal. 36, no. 2, Ser. A: Theory Methods}, 212--230, 1992

\bibitem{Murat:Hneg}
{\sc F.~Murat.}
\newblock L'injection du c\^one positif de ${H}\sp{-1}$\ dans ${W}\sp{-1,\,q}$\  est compacte pour tout $q<2$.
\newblock {\em J. Math. Pures Appl. (9)}, 60(3):309--322, 1981.


%\bibitem{PZ}
%{\sc X. Pan and L. Zhang.}
%\newblock Numerical simulation for general Rosenau-RLW equation: An average linearized conservative scheme.
%\newblock {\em Mathematical Problems in Engineering}, 517818, 2012.

\bibitem{P}
{\sc M. A. Park.}
\newblock On the Rosenau equation.
\newblock {\em Matem\'atica Aplicada e Computacional},  9(2):145--152, 1990.

\bibitem{RAB}
{\sc P. Razborova, B. Ahmed, and A. Biswas.}
\newblock Solitons, shock waves and conservation laws of Rosenau-KdV-RLW equation with power law nonlinearity.
\newblock {\em Appl. Math. Inform. Sci.}, 8:485--491, 2014.

\bibitem{RTB}
{\sc P. Razborova, H. Triki, and A. Biswas.}
\newblock Perturbation of dispersive shallow water waves.
\newblock{\em Ocean Engineering}, 63:1--7, 2013.

\bibitem{Ro1}
{\sc P. Rosenau.}
\newblock A quasi-continuous description of a nonlinear transmission line.
\newblock {\em Physica Scripta}, 34:827--829, 1986.

\bibitem{Ro2}
{\sc P. Rosenau.}
\newblock Dynamics of dense discrete systems.
\newblock{\em Progress of Theoretical Physics},  79:1028--1042, 1988.

\bibitem{SC}
{\sc M. E. Schonbek.}
\newblock {Convergence of solutions to nonlinear dispersive equations.}
\newblock {\em Comm. Partial Differential Equations}, 7(8):959--1000, 1982.

\bibitem{ZZ}
{\sc M. Zheng and J. Zhou.}
\newblock An average linear difference scheme for the generalized Rosenau-KdV Equation.
\newblock{\em J. of Appl. Math.} vol.2014, pages 9, 2014.

\bibitem{Z}
{\sc J. M. Zuo.}
\newblock Solitons and periodic solutions for the Rosenau-KdV and Rosenau-Kawahara equations.
\newblock {\em Applied Mathematics and Computation}, 215(2):835--840, 2009.

\bibitem{ZZZC}
{\sc J.M. Zuo, Y.M. Zhang, T.D. Zhang and F. Chang}.
\newblock A new conservative difference scheme for the generalized Rosenau-RLW equation.
\newblock {\em Boundary Value Problems}, 516260, 2010.


\end{thebibliography}
\end{document}